\documentclass[11pt]{article}

\usepackage{amsfonts,amssymb,amsmath,amsthm,epsfig,euscript,epstopdf}
\usepackage[noadjust]{cite}
\usepackage{filecontents}
\setlength{\textwidth}{6.3in}
\setlength{\textheight}{8.7in}
\setlength{\topmargin}{0pt}
\setlength{\headsep}{0pt}
\setlength{\headheight}{0pt}
\setlength{\oddsidemargin}{0pt}
\setlength{\evensidemargin}{0pt}
\usepackage{hyperref}

\newtheorem{theorem}{Theorem}
\newtheorem{lemma}[theorem]{Lemma}

\long\def\symbolfootnote[#1]#2{\begingroup
\def\thefootnote{\fnsymbol{footnote}}\footnote[#1]{#2}\endgroup}

% newcommands to be used in math mode

\newcommand{\Pos}{\mathbb{P}}

\newcommand{\rank}{\mathrm{rank}}

\newcommand{\cref}[1]{Corollary \ref{corollary:#1}}

\newcommand{\fig}[2]{\begin{figure}[ht]
\centerline{\scalebox{.66}{\epsfig{file=#1.eps}}}
\caption{#2}
\label{fig:#1}
\end{figure}}

\setlength{\unitlength}{1in}

\vskip.25in

\title{ $Q$-analogues of the Fibo-Stirling numbers}

\author{
Quang T. Bach \\
\small Department of Mathematics\\[-0.8ex]
\small University of California, San Diego\\[-0.8ex]
\small La Jolla, CA 92093-0112. USA\\[-0.8ex]
\small \texttt{qtbach@ucsd.edu}
\and
Roshil Paudyal\\
\small{Department of Mathematics} \\
\small{Howard University}\\
\small \texttt{roshil.paudyal@bison.howard.edu}
\and
Jeffrey B. Remmel \\
\small Department of Mathematics\\[-0.8ex]
\small University of California, San Diego\\[-0.8ex]
\small La Jolla, CA 92093-0112. USA\\[-0.8ex]
\small \texttt{remmel@math.ucsd.edu}
\and
}

\date{\small Submitted: Date 1;  Accepted: Date 2;
 Published: Date 3.\\
\small MR Subject Classifications: 05A15, 05E05 \\
keywords: Fibonacci numbers, Stirling numbers, Lah numbers}

\begin{document}

\maketitle

\begin{abstract}

Let $F_n$ denote the $n^{th}$ Fibonacci number relative to the initial 
conditions $F_0=0$ and $F_1=1$. 
In \cite{BPR}, we introduced Fibonacci analogues of the Stirling 
numbers called Fibo-Stirling numbers 
of the first and second kind. These numbers serve as the connection coefficients 
between the Fibo-falling factorial basis 
$\{(x)_{\downarrow_{F,n}}:n \geq 0\}$ and the Fibo-rising factorial 
basis $\{(x)_{\uparrow_{F,n}}:n \geq 0\}$ which are defined by  
$(x)_{\downarrow_{F,0}} = (x)_{\uparrow_{F,0}} = 1$ and for 
$k \geq 1$, $(x)_{\downarrow_{F,k}} = x(x-F_1) \cdots (x-F_{k-1})$ and 
$(x)_{\uparrow_{F,k}} = x(x+F_1) \cdots (x+F_{k-1})$. 
We gave a general rook theory model which allowed us to give combinatorial 
interpretations of the Fibo-Stirling numbers of the first and second kind.  

There are two natural 
$q$-analogues of the falling and rising Fibo-factorial basis. That is, let 
$[x]_q = \frac{q^x-1}{q-1}$. Then we let 
$[x]_{\downarrow_{q,F,0}} = \overline{[x]}_{\downarrow_{q,F,0}} = 
[x]_{\uparrow_{q,F,0}} = \overline{[x]}_{\uparrow_{q,F,0}}=1$ and, for 
$k > 0$, we let \\
$[x]_{\downarrow_{q,F,k}} = [x]_q [x-F_1]_q \cdots [x-F_{k-1}]_q$, 
$\overline{[x]}_{\downarrow_{q,F,k}}= [x]_q ([x]_q-[F_1]_q) \cdots ([x]_q-[F_{k-1}]_q)$, \\
$[x]_{\uparrow_{q,F,k}}=  [x]_q [x+F_1]_q \cdots [x+F_{k-1}]_q$, and   
$\overline{[x]}_{\uparrow_{q,F,k}}= [x]_q ([x]_q+[F_1]_q) \cdots 
([x]_q+[F_{k-1}]_q)$. 

In this paper, we show we can modify the  
rook theory model of \cite{BPR} to give combinatorial interpretations 
for the two different types $q$-analogues of the Fibo-Stirling numbers which 
arise as the connection coefficients between the two different $q$-analogues 
of the Fibonacci falling and rising factorial bases. 
\end{abstract}

\section{Introduction}

Let $\mathbb{Q}$ denote the rational numbers and 
$\mathbb{Q}[x]$ denote the ring of polynomials over $\mathbb{Q}$.
Many classical combinatorial sequences can be defined as 
connection coefficients between various basis of 
the polynomial ring $\mathbb{Q}[x]$. 
There are three very natural bases for $\mathbb{Q}[x]$. 
The usual power basis $\{x^n: n\geq 0\}$, 
the falling factorial basis  $\{(x)_{\downarrow_n}: n\geq 0\}$, 
and the rising factorial basis  $\{(x)_{\uparrow_n}: n\geq 0\}$. 
Here we let $(x)_{\downarrow_0} = (x)_{\uparrow_0} = 1$ and for 
$k \geq 1$, $(x)_{\downarrow_k} = x(x-1) \cdots (x-k+1)$ and 
$(x)_{\uparrow_k} = x(x+1) \cdots (x+k-1)$.
Then the Stirling numbers of the first kind $s_{n,k}$, 
the Stirling numbers of the second kind $S_{n,k}$ and 
the Lah numbers $L_{n,k}$ are defined by specifying 
that for all $n \geq 0$, 
\begin{equation*}
(x)_{\downarrow_n} = \sum_{k=1}^n s_{n,k} \ x^k, \ \ 
x^n = \sum_{k=1}^n S_{n,k} \ (x)_{\downarrow_k}, \ \mbox{and} \ \ 
(x)_{\uparrow_n} = \sum_{k=1}^n L_{n,k} \ (x)_{\downarrow_k}.
\end{equation*}
The signless Stirling numbers of the first kind 
are defined by setting $c_{n,k} = (-1)^{n-k} s_{n,k}$. 
Then it is well known that $c_{n,k}$, $S_{n,k}$, and $L_{n,k}$ 
can also be defined by the recursions that 
$c_{0,0} = S_{0,0} = L_{0,0} = 1$, 
$c_{n,k} = S_{n,k} = L_{n,k} = 0$ if either $n < k$ or 
$k < 0$, and 
\begin{equation*}
c_{n+1,k} = c_{n,k-1}+ n c_{n,k}, \ \ 
S_{n+1,k} =  S_{n,k-1}+kS_{n,k}, \ \mbox{and} \ \  
L_{n+1,k} = L_{n,k-1} +(n+k) L_{n,k}
\end{equation*}
for all $n,k \geq 0$. 
There are well known combinatorial interpretations of 
these connection coefficients. That is, 
$S_{n,k}$ is the number of set partitions of $[n] = \{1, \ldots, n\}$ 
into $k$ parts, $c_{n,k}$ is the number of permutations 
in the symmetric group $S_n$ with $k$ cycles, and 
$L_{n,k}$ is the number of ways to place $n$ labeled balls into $k$ unlabeled 
tubes with at least one ball in each tube.

In \cite{BPR}, we introduced Fibonacci analogues of 
the number $s_{n,k}$, $S_{n,k}$, and $L_{n,k}$.  
We started with the tiling model of the $F_n$ of \cite{SS}. 
That is, let $\mathcal{FT}_n$ denote the set of tilings  
a column of height $n$ with tiles of height  
1 or 2 such that bottom most tile is of height 1. 
For example, possible tiling configurations 
for $\mathcal{FT}_i$ for $i \leq 4$ are shown in 

\fig{Tilings}{The tilings counted by $F_i$ for $1 \leq i \leq 4$.}

For each tiling $T \in \mathcal{FT}_n$, we let 
$\mathrm{one}(T)$ is the number of tiles of 
height 1 in $T$ and $\mathrm{two}(T)$ is the number of tiles of 
height 2 in $T$ and define 
\begin{equation*}
F_n(p,q) = \sum_{T \in \mathcal{FT}_n} q^{\mathrm{one}(T)}p^{\mathrm{two}(T)}.
\end{equation*} 
It is easy to see that 
 $F_1(p,q) =q$, $F_2(p,q) =q^2$ and $F_n(p,q) =q F_{n-1}(p,q)+ 
pF_{n-2}(p,q)$ for $n \geq 2$ so that $F_n(1,1) =F_n$.
We then 
defined the $p,q$-Fibo-falling 
factorial basis 
$\{(x)_{\downarrow_{F,p,q,n}}:n \geq 0\}$ and the $p,q$-Fibo-rising factorial 
basis $\{(x)_{\uparrow_{F,p,q,n}}:n \geq 0\}$ by setting 
$(x)_{\downarrow_{F,p,q,0}} = (x)_{\uparrow_{F,p,q,0}} = 1$ and setting
\begin{eqnarray*}
 (x)_{\downarrow_{F,p,q,k}} &=& x(x-F_1(p,q)) \cdots (x-F_{k-1}(p,q)) \ \mbox{and} \\ 
(x)_{\uparrow_{F,p,q,k}} &=& x(x+F_1(p,q)) \cdots (x+F_{k-1}(p,q))
\end{eqnarray*}
for $k \geq 1$. 

Our idea to define $p,q$-Fibonacci analogues of 
the Stirling numbers of the first kind, $\mathbf{sf}_{n,k}(p,q)$, 
the Stirling numbers of the second kind, $\mathbf{Sf}_{n,k}(p,q)$, 
and the Lah numbers, $\mathbf{Lf}_{n,k}(p,q)$, is to 
define them   
to be the connection coefficients between the usual power basis 
$\{x^n: n \geq 0\}$ and the $p,q$-Fibo-rising factorial and 
$p,q$-Fibo-falling factorial bases. 
That is, we define $\mathbf{sf}_{n,k}(p,q)$, $\mathbf{Sf}_{n,k}(p,q)$, 
and $\mathbf{Lf}_{n,k}(p,q)$ by the equations  
\begin{equation}\label{pqsfdef}
(x)_{\downarrow_{F,p,q,n}} = \sum_{k=1}^n \mathbf{sf}_{n,k}(p,q) \ x^k,
\end{equation}
\begin{equation}\label{pqSfdef}
x^n = \sum_{k=1}^n \mathbf{Sf}_{n,k}(p,q) \ (x)_{\downarrow_{F,p,q,k}}, \ \mbox{and} 
\end{equation}
\begin{equation}\label{pqLfdef}
(x)_{\uparrow_{F,p,q,n}} = \sum_{k=1}^n \mathbf{Lf}_{n,k}(p,q) \ (x)_{\downarrow_{F,p,q,k}}
\end{equation}
for all $n \geq 0$. 

It is easy to see that these equations imply simple recursions for 
the connection coefficients $\mathbf{sf}_{n,k}(p,q)$s, $\mathbf{Sf}_{n,k}(p,q)$s, and $\mathbf{Lf}_{n,k}(p,q)$s. 
That is, $\mathbf{sf}_{n,k}(p,q)$s, $\mathbf{Sf}_{n,k}(p,q)$s, and $\mathbf{Lf}_{n,k}(p,q)$s can be defined by the following recursions 
\begin{eqnarray*}
\mathbf{sf}_{n+1,k}(p,q) &=& \mathbf{sf}_{n,k-1}(p,q)- F_n(p,q) \mathbf{sf}_{n,k}(p,q), \\
\mathbf{Sf}_{n+1,k}(p,q) &=& \mathbf{Sf}_{n,k-1}(p,q)+ F_k(p,q) \mathbf{Sf}_{n,k}(p,q), \ \mbox{and} \\
 \mathbf{Lf}_{n+1,k}(p,q) &=& \mathbf{Lf}_{n,k-1}(p,q)+ (F_k(p,q) +F_n(p,q))\mathbf{Lf}_{n,k}(p,q)
\end{eqnarray*}
plus the boundary 
conditions 
$$\mathbf{sf}_{0,0}(p,q)=\mathbf{Sf}_{0,0}(p,q)=\mathbf{Lf}_{0,0}(p,q)=1$$ 
and 
$$\mathbf{sf}_{n,k}(p,q) =\mathbf{Sf}_{n,k}(p,q) =\mathbf{Lf}_{n,k}(p,q) =0$$
 if $k > n$ or $k < 0$. 
If we define $\mathbf{cf}_{n,k}(p,q):= (-1)^{n-k}\mathbf{sf}_{n,k}(p,q)$, then 
$\mathbf{cf}_{n,k}(p,q)$s can be defined by the recursions   
\begin{equation}\label{pqcfrec}
\mathbf{cf}_{n+1,k}(p,q) = \mathbf{cf}_{n,k-1}(p,q)+F_n(p,q) \mathbf{cf}_{n,k}(p,q) 
\end{equation}  plus the boundary 
conditions $\mathbf{cf}_{0,0}(p,q)=1$ and $\mathbf{cf}_{n,k}(p,q) =0$ if $k > n$ or $k < 0$. It also follows that 
\begin{equation}\label{pqcfdef}
(x)_{\uparrow_{F,p,q,n}} = \sum_{k=1}^n \mathbf{cf}_{n,k}(p,q) \ x^k.
\end{equation}
In \cite{BPR}, we developed a new rook theory model to give a 
combinatorial interpretation of  
the $\mathbf{cf}_{n,k}(p,q)$s and the $\mathbf{Sf}_{n,k}(p,q)$s and to give 
combinatorial  proofs of their basic properties. 
This new rook theory model is 
a modification of the rook theory model for $S_{n,k}$ 
and $c_{n,k}$ except that we replace rooks by Fibonacci 
tilings.

 The main goal of this paper 
is to show how that model can be modified to 
give combinatorial interpretations to two new  
$q$-analogues of the $\mathbf{cf}_{n,k}(1,1)$s and the 
$\mathbf{Sf}_{n,k}(1,1)$s.
Let $[0]_q =1$ and $[x]_q = \frac{1-q^x}{1-q}$.  When 
$n$ is a positive integer, then $[n]_q = 1+ q+ \cdots +q^{n-1}$ is 
the usual $q$-analogue of $n$.  Then there are two natural 
analogues of the falling and rising Fibo-factorial basis. First we let  
$[x]_{\downarrow_{q,F,0}} = \overline{[x]}_{\downarrow_{q,F,0}} = 
[x]_{\uparrow_{q,F,0}} = \overline{[x]}_{\uparrow_{q,F,0}}=1$.  For $k > 0$, 
we let 
$k > 0$, 
\begin{eqnarray*}
\ [x]_{\downarrow_{q,F,k}} &=& [x]_q [x-F_1]_q \cdots [x-F_{k-1}]_q, \\
\ \overline{[x]}_{\downarrow_{q,F,k}}&=& [x]_q ([x]_q-[F_1]_q) \cdots ([x]_q-[F_{k-1}]_q), \\
\ [x]_{\uparrow_{q,F,k}}&=& [x]_q [x+F_1]_q \cdots [x+F_{k-1}]_q, \ \mbox{and} \\
\ \overline{[x]}_{\uparrow_{q,F,k}}&=& [x]_q ([x]_q+[F_1]_q) \cdots 
([x]_q+[F_{k-1}]_q).
\end{eqnarray*}
Then we define $\mathbf{cF}_{n,k}(q)$ and $\overline{\mathbf{cF}}_{n,k}(q)$ 
by the equations 
\begin{equation}\label{cFdef}
[x]_{\uparrow_{q,F,n}}= \sum_{k=1}^n \mathbf{cF}_{n,k}(q) [x]_q^k
\end{equation}
and 
\begin{equation}\label{ovcFdef}
\overline{[x]}_{\uparrow_{q,F,n}}= \sum_{k=1}^n 
\overline{\mathbf{cF}}_{n,k}(q)[x]_q^k.
\end{equation}
Similarly, we define $\mathbf{SF}_{n,k}(q)$ and 
$\overline{\mathbf{SF}}_{n,k}(q)$ 
by the equations 
\begin{equation}\label{SFdef}
[x]_q^n= \sum_{k=1}^n \mathbf{SF}_{n,k}(q) [x]_{\downarrow_{q,F,k}}
\end{equation}
and 
\begin{equation}\label{ovSFdef}
[x]_q^n= \sum_{k=1}^n 
\overline{\mathbf{SF}}_{n,k}(q) \overline{[x]}_{\downarrow_{q,F,k}}
\end{equation}

One can easily find recursions for these polynomials. 
For example, 
\begin{eqnarray*}
[x]_q^{n+1}&=& \sum_{k=1}^{n+1} \mathbf{SF}_{n+1,k}(q) 
[x]_{\downarrow_{q,F,k}} = \sum_{k=1}^n \mathbf{SF}_{n,k}(q) [x]_{\downarrow_{q,F,k}}[x]_q \\
&=& \sum_{k=1}^n \mathbf{SF}_{n,k}(q) [x]_{\downarrow_{q,F,k}}([F_k]_q 
+ q^{F_k}[x-F_k]_q)\\
&=& \sum_{k=1}^n [F_k]_q \mathbf{SF}_{n,k}(q) [x]_{\downarrow_{q,F,k}}
+\sum_{k=1}^n q^{F_k}\mathbf{SF}_{n,k}(q) [x]_{\downarrow_{q,F,k+1}}.
\end{eqnarray*}
Taking the coefficient of $[x]_{\downarrow_{q,F,k}}[x]_q$ on both sides 
shows that 
\begin{equation}\label{SFrec}
 \mathbf{SF}_{n+1,k}(q)=  q^{F_{k-1}}\mathbf{SF}_{n,k-1}(q)+[F_k]_q 
\mathbf{SF}_{n,k}(q)
\end{equation}
for $0 \leq k \leq n+1$.
It is then easy to check that the  $\mathbf{SF}_{n,k}(q)$s can be defined  
by the recursions (\ref{SFrec}) with the initial conditions 
that $\mathbf{SF}_{0,0}(q)=1$ and  $\mathbf{SF}_{n,k}(q)=0$ if $k < 0$ or 
$n < k$. 
A similar argument  will show that 
$\overline{\mathbf{SF}}_{n,k}(q)$ can be defined 
by the initial conditions that 
$\overline{\mathbf{SF}}_{0,0}(q)=1$ and  $\overline{\mathbf{SF}}_{n,k}(q)=0$ if $k < 0$ or 
$n <  k$ and the recursion 
\begin{equation}\label{ovSFrec}
 \overline{\mathbf{SF}}_{n+1,k}(q)=  \overline{\mathbf{SF}}_{n,k-1}(q)+[F_k]_q 
\overline{\mathbf{SF}}_{n,k}(q).
\end{equation}
for $0 \leq k \leq n+1$. 
Similarly, $\mathbf{cF}_{n,k}(q)$ can be defined 
by the initial conditions that 
$\mathbf{cF}_{0,0}(q)=1$ and  $\mathbf{cF}_{n,k}(q)=0$ if $k < 0$ or 
$n <  k$ and the recursion
\begin{equation}\label{cFrec}
 \mathbf{cF}_{n+1,k}(q)=  q^{F_{n-1}}\mathbf{cF}_{n,k-1}(q)+[F_n]_q 
\mathbf{cF}_{n,k}(q),
\end{equation}
for $0 \leq k \leq n+1$, and  
$\overline{\mathbf{cF}}_{n,k}(q)$ can be defined 
by the initial conditions that 
$\overline{\mathbf{cF}}_{0,0}(q)$ and  $\overline{\mathbf{cF}}_{n,k}(q)=0$ if $k < 0$ or 
$n < k$ and the recursion 
\begin{equation}\label{ovcFrec}
 \overline{\mathbf{cF}}_{n+1,k}(q)=  \overline{\mathbf{cF}}_{n,k-1}(q)+
[F_n]_q \overline{\mathbf{cF}}_{n,k}(q)
\end{equation}
for $0 \leq k \leq n+1$.

The main goal of this paper is to give a rook theory model 
for the polynomials $\mathbf{cF}_{n,k}(q)$, 
$\overline{\mathbf{cF}}_{n,k}(q)$, $\mathbf{SF}_{n,k}(q)$, and 
$\overline{\mathbf{SF}}_{n,k}(q)$.   Our rook theory 
model will allow us to give combinatorial proofs of 
the defining equations (\ref{cFdef}), (\ref{ovcFdef}), (\ref{SFdef}), 
and (\ref{ovSFdef}) as well as combinatorial proofs of 
the recursions (\ref{SFrec}), (\ref{ovSFrec}), (\ref{cFrec}), and (\ref{ovcFrec}).
We shall see that our rook theory model 
$\mathbf{cF}_{n,k}(q)$, 
$\overline{\mathbf{cF}}_{n,k}(q)$, $\mathbf{SF}_{n,k}(q)$, and 
$\overline{\mathbf{SF}}_{n,k}(q)$ is essentially the same 
as the the rook theory model used in \cite{BPR} to interpret 
the $\mathbf{Sf}_{n,k}(p,q)$s and $\mathbf{Sf}_{n,k}(p,q)$s 
but with a different weighting scheme.

The outline of the paper is as follows. In Section 2, we 
describe a ranking and unranking theory for the set of 
Fibonacci tilings which will a crucial element in our weighting 
scheme for our rook theory model that we shall use to give 
combinatorial interpretations of   
the polynomials $\mathbf{cF}_{n,k}(q)$, 
$\overline{\mathbf{cF}}_{n,k}(q)$, $\mathbf{SF}_{n,k}(q)$, and 
$\overline{\mathbf{SF}}_{n,k}(q)$.  In section 3, we shall 
review the rook theory model in \cite{BPR} and show how it 
can be modified for our purposes.  In Section 4, 
we shall prove general product formulas for Ferrers boards 
in our new model which will specialize 
(\ref{cFdef}), (\ref{ovcFdef}), (\ref{SFdef}), 
and (\ref{ovSFdef}) in the case where the Ferrers board 
is the staircase board whose column heights are 
$0,1, \ldots, n-1$ reading from left to right. In Section 5, 
we shall prove various special properties of the polynomials $\mathbf{cF}_{n,k}(q)$, 
$\overline{\mathbf{cF}}_{n,k}(q)$, $\mathbf{SF}_{n,k}(q)$, and 
$\overline{\mathbf{SF}}_{n,k}(q)$.

\section{Ranking and Unranking Fibonacci Tilings.}

There is a well developed theory for ranking and unranking 
combinatorial objects. See for example, Williamson's book 
\cite{Gill}. That is, give a collection of 
combinatorial objects $\mathcal{O}$ of cardinality $n$, one 
wants to define bijections  $\mathrm{rank}:\mathcal{O} \rightarrow 
\{0, \ldots, n-1\}$ and $\mathrm{unrank}:\{0, \ldots, n-1\}\rightarrow \mathcal{O}$ 
which are inverses of each other. In our case, we let 
$\mathcal{F}_n$ denote the set of Fibonnaci tilings of height $n$. Then 
we construct a tree which we call the Fibonacci tree for $F_n$. 
That is, we start from the top of a Fibonacci tiling 
and branch left if we see a tile of height 1 and branch 
right if we see a tiling of height 2. 
For example, 
the Fibonacci tree for $F_5$ is pictured 
in Figure \ref{fig:Ftree}. 

\fig{Ftree}{The tree for $F_5$}

Then for any tiling $T \in \mathcal{F}_n$, we define 
the rank of $T$ for $F_n$, $\rank_n(T)$, to be the number of paths to the left of the path for $T$ in the Fibonacci tree for $F_n$. Clearly 
$$\{\rank_n(T): T \in \mathcal{F}_n\}= \{0,1,2, \ldots , F_n-1\}$$ 
so that $\sum_{T \in \mathcal{F}_n} q^{\rank_n(T)} = 1+q+ \cdots +q^{F_n-1} = 
[F_n]_q$. It is, in fact, quite easy to see compute 
the functions $\rank_n$ and $\mathrm{unrank}_n$ in this situation. 
That is, suppose that we represent the tiling $T$ as a sequence 
$seq(T) = (t_1, \ldots ,t_n)$ where reading the tiles starting at the bottom, 
$t_i = 1$ if there is a tiling $t_i$ of height $1$ that ends at level $i$ in 
$T$, $t_i =2$ if there is $t_i$ of height $2$ that ends at level $i$ in $T$, 
and $t_i =0$ if there is no tile $t_i$ that ends at level $i$ in $T$. 
For example, the tiling of $T$ height 9  pictured in Figure \ref{fig:F9}
would be represented by the sequence $seq(T)= (1,0,2,1,1,1,0,2,1)$. 

\fig{F9}{A tiling in $\mathcal{F}_9$.}

For any statement $A$, we let  $\chi(A) =1$ is $A$ is true and $\chi(A) =0$ 
if $A$ is false. Then we have the following lemma. 

\begin{lemma} Suppose that $T \in \mathcal{F}_n$ is a Fibonacci tiling 
such that $seq(T) = (t_1, \ldots, t_n)$. Then 
$\rank_n(T) = \sum_{i=1}^n F_{i-1}\chi(t_i=2)$. 
\end{lemma}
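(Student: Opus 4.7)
The plan is to proceed by induction on $n$, leveraging the recursive structure of the Fibonacci tree. The key observation is that since every tiling in $\mathcal{F}_n$ has a $1$-tile at the bottom, the identity $F_n = F_{n-1} + F_{n-2}$ manifests at the root of the tree: branching left (top tile has height $1$) leads to a subtree isomorphic to the Fibonacci tree for $F_{n-1}$, while branching right (top tile has height $2$) leads to a subtree isomorphic to the tree for $F_{n-2}$. In particular, all $F_{n-1}$ paths in the left subtree lie to the left of every path in the right subtree under the left-to-right ordering on leaves.

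For the base case $n=1$, the unique tiling is a single $1$-tile with $seq(T) = (1)$, so both $\rank_1(T)$ and $F_0\chi(t_1=2)$ equal $0$. For the inductive step, I would split on the value of $t_n$, which must be $1$ or $2$ since some tile ends at level $n$. If $t_n = 1$, then removing the top $1$-tile gives $T' \in \mathcal{F}_{n-1}$ with $seq(T') = (t_1,\dots,t_{n-1})$, and the tree decomposition yields $\rank_n(T) = \rank_{n-1}(T')$; applying the inductive hypothesis and noting that the $i=n$ term $F_{n-1}\chi(t_n=2)$ vanishes closes this case. If $t_n = 2$, then necessarily $t_{n-1}=0$, and removing the top $2$-tile produces $T'' \in \mathcal{F}_{n-2}$ with $seq(T'') = (t_1,\dots,t_{n-2})$, giving $\rank_n(T) = F_{n-1} + \rank_{n-2}(T'')$. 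The extra $F_{n-1}$ matches the $i=n$ contribution to the target sum, while the $i=n-1$ term vanishes since $t_{n-1}=0$, so the inductive hypothesis again completes the argument.

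No substantive obstacle arises; the argument is essentially routine once the recursive tree structure is in hand. The only minor point to watch is that the Fibonacci tree is traversed top-down while $seq(T)$ is indexed bottom-up, so it is the trailing entries $t_{n-1}, t_n$ rather than $t_1, t_2$ that govern each inductive step.
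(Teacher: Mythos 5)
Your proposal is correct and follows essentially the same route as the paper: induction on $n$ using the left/right decomposition of the Fibonacci tree at the top tile, with $\rank_n(T)=\rank_{n-1}(T'')$ when $t_n=1$ and $\rank_n(T)=F_{n-1}+\rank_{n-2}(T')$ when $t_n=2$. The only cosmetic difference is that the paper also records $n=2$ as a base case, which your argument covers implicitly since $t_2=2$ is impossible (the bottom tile has height~$1$).
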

\begin{proof}
The theorem is easy to prove by induction.  It is clearly true for 
$n=1$ and $n=2$.  Now suppose $n \geq 3$. Then it is easy 
to see from the Fibonacci tree for $F_n$ that if $t_n=2$ so that $t_{n-1} =0$, 
then the tree 
that starts at level $n-1$ which represents taking the path to the 
left starting at level $n$ is just the Fibonacci tree for $F_{n-1}$ and 
hence this tree will contain $F_{n-1}$ leaves which will all be to the left 
of path for the tiling $T$.  Then the tree that starting at level 
$n-2$ which represents taking the path to the 
right starting at level $n$ is just the Fibonacci tree for $F_{n-2}$ and 
the number of paths in this tree which lie to the left of 
the path for $T$ is just that the number of paths to the left of 
the tiling $T'$ such that $seq(T') = (t_1,\ldots,t_{n-2})$ in 
the Fibonacci tree for $F_{n-2}$.  
Thus in this case 
\begin{equation}\label{rank1}
\rank_n(T) = F_{n-1}+\rank_{n-2}(T')= 
F_{n-1}+\rank_{n-2}(t_1, \ldots, t_{n-2}).
\end{equation}
On the other hand if $t_n =1$, then we branch left at level $n$ so 
that the number of paths to the left of the path for $T$ in the 
Fibonacci tree for $F_n$ will just be the number of paths to 
the left of the tiling $T''$ such that $seq(T'') = (t_1, \ldots, t_{n-1})$ 
in the 
Fibonacci tree for $F_{n-1}$. 
Thus in this case 
\begin{equation}\label{rank2}
\rank_n(T) = \rank_{n-1}(T'')= \rank_{n-1}(t_1, \ldots, t_{n-1}).
\end{equation}
\end{proof}

For example, for the tiling $T$ in  Figure \ref{fig:F9}, 
$\rank_9(T) = F_2+F_8 = 1+21 =22$.

For the unrank function, we must rely on Zeckendorf's theorem 
\cite{Z} which states that every positive integer $n$ is uniquely represented 
as sum $n = \sum_{i=0}^k F_{c_i}$ where each $c_i \geq 2$ and 
$c_{i+1} > c_i +1$. Indeed, Zeckendorf's theorem 
says that the greedy algorithm give us the proper representation. 
That is, given $n$, find $k$ such that $F_k \leq n < F_{k+1}$, then 
the representation for $n$ is gotten by taking the representation 
for $n-F_k$ and adding $F_k$. For example, suppose that we want to find 
$T$ such that $\rank_{13}(T) =100$. Then
\begin{enumerate} 
\item $F_{11} =89 \leq 100< F_{12}=144$ so that we need to find 
the Fibonacci representation of $100-89 =11$.
\item $F_6 = 8 \leq  11 < F_7 =13$ so that we need to find the Fibonacci 
representation of $11-8 =3$. 
\item $F_4 = 3 \leq 3 < F_5 =5$. 
\end{enumerate}
Thus we can represent 
$100 =  F_4+ F_6 + F_{11} = 3+ 8 + 89$ so that 
$$seq(T) =(1,1,1,0,2,0,2,1,1,1,0,2,1).$$

\section{The rook theory model for the 
$\mathbf{SF}_{n,k}(q)$s and the $\mathbf{cF}_{n,k}(q)$s.}

In this section, we shall give a rook theory model which 
will allow us to give combinatorial interpretations for 
the $\mathbf{SF}_{n,k}(q)$s and the $\mathbf{cF}_{n,k}(q)$s. 
This rook theory 
model is based on the one 
which Bach, Paudyal, and Remmel used in \cite{BPR} to give combinatorial interpretations to 
the $\mathbf{Sf}_{n,k}(p,q)$s and the $\mathbf{cf}_{n,k}(p,q)$s.  
Thus, we shall briefly review the rook theory model in \cite{BPR}.

A Ferrers board $B=F(b_1, \ldots, b_n)$ is 
a board whose column heights are $b_1, \ldots, b_n$, reading 
from left to right, such that $0\leq b_1 \leq b_2 \leq \cdots \leq b_n$. 
We shall let $B_n$ denote the Ferrers board $F(0,1, \ldots, n-1)$. 
For example, the Ferrers board $B = F(2,2,3,5)$ is 
pictured on the left of Figure \ref{fig:Ferrers} 
and the Ferrers board $B_4$ is pictured on the right of 
Figure \ref{fig:Ferrers} 

\fig{Ferrers}{Ferrers boards.}

Classically, there are two type of rook placements that we 
consider on a Ferrers board $B$.  First we let 
$\mathcal{N}_k(B)$ be the set of all placements of 
$k$ rooks in $B$ such that no two rooks lie in the same 
row or column.  We shall call an element  of 
$\mathcal{N}_k(B)$ a placement of $k$ non-attacking rooks 
in $B$ or just a rook placement for short.  We let 
$\mathcal{F}_k(B)$ be the set of all placements of 
$k$ rooks in $B$ such that no two rooks lie in the same 
column.  We shall call an element  of 
$\mathcal{F}_k(B)$ a file placement of $k$ rooks 
in $B$.   
Thus file placements differ from rook placements 
in that file placements allow 
two rooks to be in the same row.  For example, 
we exhibit a placement of 3 non-attacking rooks 
in $F(2,2,3,5)$ on the left in Figure 
\ref{fig:placements} and a file placement of 3 rooks on 
the right in  Figure \ref{fig:placements}.

\fig{placements}{Examples of rook and file placements.}

Given a Ferrers board $B = F(b_1, \ldots, b_n)$, we 
define the $k$-th rook number of $B$ to be 
$r_k(B) = |\mathcal{N}_k(B)|$ and the $k$-th file number 
of $B$ to be $f_k(B) = |\mathcal{F}_k(B)|$. Then the rook 
theory interpretation of the classical Stirling numbers is 
\begin{eqnarray*}
S_{n,k} &=& r_{n-k}(B_n) \ \mbox{for all}\ 1 \leq k \leq n \ \mbox{and} \\
c_{n,k} &=& f_{n-k}(B_n) \ \mbox{for all}\ 1 \leq k \leq n.
\end{eqnarray*}

The idea of \cite{BPR} is to modify the sets $\mathcal{N}_k(B)$ and 
$\mathcal{F}_k(B)$ to replace rooks with Fibonacci tilings. 
The analogue of file placements is very straightforward. 
That is, if $B=F(b_1, \ldots, b_n)$, then we let 
$\mathcal{FT}_k(B)$ denote the set of all configurations such that 
there are $k$ columns $(i_1, \ldots, i_k)$ of $B$ where 
$1 \leq i_1 < \cdots < i_k \leq n$ such that in each 
column $i_j$, we have placed one of the tilings $T_{i_j}$ for the Fibonacci 
number $F_{b_{i_j}}$.  We shall call such a configuration 
a Fibonacci file placement and denote it by 
$$P = (({i_1},T_{i_1}),  \ldots, ({i_k},T_{i_k})).$$
 Let 
$\mathrm{one}(P)$ denote the number of tiles of height 1 that appear 
in $P$ and $\mathrm{two}(P)$ denote the number of tiles of height 2 that appear 
in $P$.  Then in \cite{BPR}, we defined the weight of $P$, $WF(P,p,q)$, to be 
$q^{\mathrm{one}(P)}p^{\mathrm{two}(P)}$.  For example, we have 
pictured an element $P$ of $\mathcal{FT}_3(F(2,3,4,4,5))$ in 
Figure \ref{fig:Fibfile} whose weight is $q^7 p^2$. Then 
we defined the $k$-th $p,q$-Fibonacci file polynomial of $B$, $\mathbf{fT}_k(B,p,q)$, 
by setting 
\begin{equation*}
 \mathbf{fT}_k(B,p,q) = \sum_{P \in \mathcal{FT}_k(B)} WF(P,p,q).
\end{equation*}
If $k =0$, then the only element of $\mathcal{FT}_k(B)$ is the empty placement 
whose weight by definition is 1.

\fig{Fibfile}{A Fibonacci file placement.}

Then in \cite{BPR}, we proved the following theorem concerning 
Fibonacci file placements in Ferrers boards. 

\begin{theorem}\label{thm:Ferrersfilerec} 
Let $B =F(b_1, \ldots, b_n)$ be a Ferrers 
board where $0 \leq b_1 \leq \cdots \leq b_n$ and $b_n > 0$. 
Let $B^- = F(b_1, \ldots, b_{n-1})$. Then for all 
$1 \leq k \leq n$, 
\begin{equation}\label{Fibtilerec}
\mathbf{fT}_k(B,p,q) = \mathbf{fT}_{k}(B^-,p,q)+ F_{b_n}(p,q) \mathbf{fT}_{k-1}(B^-,p,q).
\end{equation}
\end{theorem}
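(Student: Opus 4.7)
The plan is to prove the recursion by the standard dichotomy on the last column of $B$, namely whether column $n$ is one of the $k$ columns carrying a Fibonacci tiling in the placement $P \in \mathcal{FT}_k(B)$.

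First, I would partition $\mathcal{FT}_k(B)$ into two disjoint subsets: $\mathcal{A}_k$, the placements in which column $n$ is empty (no tiling placed there), and $\mathcal{B}_k$, the placements in which column $n$ is one of the chosen columns $i_1 < \cdots < i_k = n$.

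For the first piece, a placement $P \in \mathcal{A}_k$ is determined by the data of a Fibonacci file placement on the remaining columns, i.e.\ on $B^- = F(b_1,\ldots,b_{n-1})$, using exactly $k$ columns. Since no tiles appear in column $n$, the weight $q^{\mathrm{one}(P)}p^{\mathrm{two}(P)}$ is unchanged when we regard $P$ as an element of $\mathcal{FT}_k(B^-)$. Summing over $\mathcal{A}_k$ therefore produces $\mathbf{fT}_k(B^-,p,q)$.

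For the second piece, an element of $\mathcal{B}_k$ is the data of (i) a choice of $k-1$ distinguished columns among $1,\ldots,n-1$ together with a Fibonacci tiling in each, and (ii) a Fibonacci tiling $T_n \in \mathcal{FT}_{b_n}$ placed in column $n$. The weight of $P$ factors as the weight of the induced placement $P^- \in \mathcal{FT}_{k-1}(B^-)$ times $q^{\mathrm{one}(T_n)}p^{\mathrm{two}(T_n)}$. Summing first over $T_n$ and invoking the definition $F_{b_n}(p,q) = \sum_{T \in \mathcal{FT}_{b_n}} q^{\mathrm{one}(T)}p^{\mathrm{two}(T)}$ yields $F_{b_n}(p,q)\,\mathbf{fT}_{k-1}(B^-,p,q)$. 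Adding the two contributions gives \eqref{Fibtilerec}.

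There is no real obstacle here; the only minor thing to check is the boundary behavior. When $k=1$, the convention that $\mathbf{fT}_0(B^-,p,q)=1$ (empty placement) correctly accounts for the single chosen column being column $n$, and when $k=n$ every column must be occupied, so $\mathcal{A}_n=\emptyset$ and the recursion reduces to its second term, consistent with the formula. The argument uses only the column-by-column structure of Ferrers boards and the definition of $F_{b_n}(p,q)$, so it goes through verbatim.
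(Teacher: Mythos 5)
Your proof is correct and follows exactly the decomposition the paper uses (for the $q$-analogue, Theorem \ref{thm:qFerrersfilerec}; the stated theorem itself is quoted from \cite{BPR} without proof here): classify placements in $\mathcal{FT}_k(B)$ by whether the last column carries a tiling, with the empty-last-column case giving $\mathbf{fT}_k(B^-,p,q)$ and the occupied case contributing the factor $F_{b_n}(p,q)=\sum_{T\in\mathcal{FT}_{b_n}}q^{\mathrm{one}(T)}p^{\mathrm{two}(T)}$. Nothing further is needed.
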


To obtain the $q$-analogues that we desire for this paper, we 
define a new weight functions for elements of $\mathcal{FT}_k(B)$ where 
$B=F(b_1, \ldots, b_n)$ is Ferrers board. That is given 
a Fibonacci file placement 
$P = (({i_1},T_{i_1}),  \ldots, ({i_{n-k}},T_{i_{n-k}}))$ in 
$\mathcal{FT}_{n-k}(B)$, let $({j_1}, \ldots, {j_k})$ be 
the sequence of columns in $B$ which have no tilings, reading 
from left to right. Then we define 
\begin{eqnarray*}
\mathbf{w_{B,q}}(P) &=& q^{\sum_{s=1}^{n-k} \rank_{b_{i_s}}(T_{i_s}) + 
\sum_{t=1}^k F_{b_{j_t}}} \ \mbox{and} \\
\overline{\mathbf{w_{B,q}}}(P) &=& 
q^{\sum_{s=1}^{n-k} \rank_{b_{i_s}}(T_{i_s})}
\end{eqnarray*}
Note that the only difference between these two weight functions 
is that if $b_i$ is column that does not contain a tiling in 
$P$, then it contributes a factor of 
$q^{F_{b_i}}$ to $\mathbf{w_{B,q}}(P)$ and a factor of 1 to 
$\overline{\mathbf{w_{B,q}}}(P)$.
We then define $\mathbf{FT}_k(B,q)$ and $\overline{\mathbf{FT}}_k(B,q)$, 
by setting 
\begin{eqnarray*}
 \mathbf{FT}_k(B,q) &=& \sum_{P \in \mathcal{FT}_k(B)} \mathbf{w_{B,q}}(P) 
\ \mbox{and} \\
\overline{\mathbf{FT}}_k(B,q) &=& \sum_{P \in \mathcal{FT}_k(B)} 
\overline{\mathbf{w_{B,q}}}(P).
\end{eqnarray*}
If $k =0$, then the only element of $\mathcal{FT}_k(B)$ is the empty 
placement $\emptyset$ so that  
$\mathbf{w_{B,q}}(\emptyset) =q^{\sum_{i=1}^n F_{b_i}}$ and 
$\overline{\mathbf{w_{B,q}}}(\emptyset) =1$.

Then we have the following analogue of Theorem 
\ref{thm:Ferrersfilerec}.

\begin{theorem}\label{thm:qFerrersfilerec} 
Let $B =F(b_1, \ldots, b_n)$ be a Ferrers 
board where $0 \leq b_1 \leq \cdots \leq b_n$ and $b_n > 0$. 
Let $B^- = F(b_1, \ldots, b_{n-1})$. Then for all 
$1 \leq k \leq n$, 
\begin{equation}\label{qFibtilerec}
\mathbf{FT}_k(B,q) = q^{F_{b_n}}\mathbf{FT}_{k}(B^-,q)+ [F_{b_n}]_q \mathbf{FT}_{k-1}(B^-,p,q)
\end{equation}
and 
\begin{equation}\label{ovqFibtilerec}
\overline{\mathbf{FT}}_k(B,q) = \overline{\mathbf{FT}}_{k}(B^-,q)+ [F_{b_n}]_q \mathbf{FT}_{k-1}(B^-,p,q).
\end{equation}
\end{theorem}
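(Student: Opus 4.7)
My plan is to prove both recursions simultaneously by partitioning $\mathcal{FT}_k(B)$ according to whether the rightmost column of $B$ is empty or carries a tiling. Every $P \in \mathcal{FT}_k(B)$ falls into exactly one of two cases: (i) column $n$ is one of the $n-k$ empty columns of $P$, or (ii) column $n$ contains a Fibonacci tiling of its $b_n$ cells. In case (i), deleting column $n$ yields a bijection with $\mathcal{FT}_k(B^-)$. In case (ii), recording the $k-1$ tilings lying in $B^-$ together with the tiling $T_n$ placed in column $n$ gives a bijection with $\mathcal{FT}_{k-1}(B^-) \times \mathcal{F}_{b_n}$.

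The key observation is that both weight functions $\mathbf{w_{B,q}}$ and $\overline{\mathbf{w_{B,q}}}$ factor column by column: the exponent of $q$ in $\mathbf{w_{B,q}}(P)$ is a sum over all $n$ columns in which each tiled column $i_s$ contributes $\rank_{b_{i_s}}(T_{i_s})$ and each empty column $j_t$ contributes $F_{b_{j_t}}$. Hence for $P$ in case (i) one has $\mathbf{w_{B,q}}(P) = q^{F_{b_n}} \cdot \mathbf{w_{B^-,q}}(P')$, where $P'$ is the restriction to $B^-$, while for $P$ in case (ii) one has $\mathbf{w_{B,q}}(P) = q^{\rank_{b_n}(T_n)} \cdot \mathbf{w_{B^-,q}}(P')$. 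Summing case (ii) over $T_n \in \mathcal{F}_{b_n}$ and invoking the identity $\sum_{T \in \mathcal{F}_{b_n}} q^{\rank_{b_n}(T)} = [F_{b_n}]_q$ from Section 2 produces the factor $[F_{b_n}]_q \mathbf{FT}_{k-1}(B^-,q)$, while case (i) contributes $q^{F_{b_n}} \mathbf{FT}_k(B^-,q)$. Adding the two cases yields (\ref{qFibtilerec}).

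For (\ref{ovqFibtilerec}), the identical argument applies to $\overline{\mathbf{w_{B,q}}}$; the only change is that empty columns now contribute a factor of $1$ rather than $q^{F_{b_j}}$. This removes the $q^{F_{b_n}}$ prefactor in case (i), leaving $\overline{\mathbf{FT}}_k(B^-,q)$, while case (ii) becomes $[F_{b_n}]_q \overline{\mathbf{FT}}_{k-1}(B^-,q)$. There is no genuine obstacle to overcome; the single step that requires care is confirming that the weight of $P$ factors cleanly as the weight of its restriction to $B^-$ multiplied by an isolated column-$n$ contribution, which is immediate from the column-by-column form of the weight definitions combined with the rank-generating function identity $\sum_{T \in \mathcal{F}_{b_n}} q^{\rank_{b_n}(T)} = [F_{b_n}]_q$ supplied by Lemma 1.
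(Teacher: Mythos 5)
Your proof is correct and follows the same route as the paper: classify placements in $\mathcal{FT}_k(B)$ by whether the last column is empty or tiled, note that the weight factors column by column, and use $\sum_{T \in \mathcal{F}_{b_n}} q^{\rank_{b_n}(T)} = [F_{b_n}]_q$ for the tiled case. No issues.
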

\begin{proof} 
We claim (\ref{qFibtilerec}) 
results by classifying the Fibonacci file placements 
in $\mathcal{FT}_k(B)$ according to whether there is a tiling in the 
last column. If there is no tiling in the last column of $P$, 
then removing the last column of  $P$ produces 
 an element of $\mathcal{FT}_k(B^-)$ . Thus such placements 
contribute $q^{F_{b_n}}\mathbf{FT}_{k}(B^-,q)$ to $\mathbf{FT}_{k}(B,q)$ 
since the fact that the last column has no tiling means that 
it contributes 
a factor of $q^{F_{b_n}}$ to $\mathbf{w_{B,q}}(P)$.  
If there is a tiling in 
the last column, then the Fibonacci file placement  
that results by removing the last column is an 
element of $\mathcal{FT}_{k-1}(B^-)$ and the sum of 
the weights of the possible 
Fibonacci tilings of height $b_n$ for the last column 
is $\sum_{T \in \mathcal{F}_{b_n}}q^{\rank_{b_n}(T)} = 
[F_{b_n}]_q$. Hence such placements 
contribute $[F_{b_n}]_q \mathbf{FT}_{k-1}(B^-,q)$ to 
$\mathbf{FT}_{k}(B,q)$. Thus 
$$\mathbf{FT}_k(B,q) = q^{F_{b_n}}\mathbf{FT}_{k}(B^-,q)+ [F_{b_n}]_q \mathbf{FT}_{k-1}(B^-,p,q).$$

A similar argument will prove (\ref{ovqFibtilerec}). 
\end{proof}

If $B=F(b_1, \ldots, b_n)$ is a Ferrers board, 
then we let $B_x$ denote the board that results by 
adding $x$ rows of length $n$ below $B$.  We label 
these rows from top to bottom with the numbers 
$1,2, \ldots, x$. We shall call  
the line that separates $B$ from these $x$ rows the {\em bar}. 
A mixed file placement $P$ on the board $B_x$ consists 
of picking for each column $b_i$ either (i) a Fibonacci tiling 
$T_i$ of height $b_i$ above the bar or (ii) picking 
a row $j$ below the bar to place a rook in the cell in row $j$ 
and column $i$.  Let $\mathcal{M}_n(B_x)$ denote set of all 
mixed rook placements on $B$. For any $P \in \mathcal{M}_n(B_x)$, 
we let $\mathrm{one}(P)$ denote the number of tiles of height 1 that appear 
in $P$ and $\mathrm{two}(P)$ denote the set tiles of height 2 that appear 
in $P$.  Then in \cite{BPR}, we defined the weight of $P$, $WF(P,p,q)$, to be 
$q^{\mathrm{one}(P)}p^{\mathrm{two}(P)}$.
For example, 
Figure \ref{fig:mixed} pictures a mixed placement $P$ in 
$B_x$ where $B = F(2,3,4,4,5,5)$ and $x$ is 9 such that 
$WF(P,p,q) =  q^7p^2$. 

\fig{mixed}{A mixed file placement.}

Also in \cite{BPR}, we proved the following theorem by counting  
$\sum_{P \in \mathcal{M}_n(B_x)} WF(P,p,q)$ in two different ways. 

\begin{theorem}\label{thm:Ferrersfileprod}
Let $B =F(b_1, \ldots, b_n)$ be a Ferrers 
board where $0 \leq b_1 \leq \cdots \leq b_n$ and $b_n > 0$. 
\begin{equation}\label{Fibfileprod}
(x+F_{b_1}(p,q))(x+F_{b_2}(p,q)) \cdots (x+F_{b_n}(p,q)) =
\sum_{k=0}^n \mathbf{fT}_k(B,p,q) x^{n-k}.
\end{equation}
\end{theorem}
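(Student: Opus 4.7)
The plan is a standard two-way-counting argument applied to the weighted sum
\[
\Sigma(B_x) := \sum_{P \in \mathcal{M}_n(B_x)} WF(P,p,q),
\]
treating $x$ first as a positive integer and then invoking the polynomial identity principle. The main observation is that in a mixed file placement, the choices made in the different columns are independent: for each column $i$, one either (a) places a Fibonacci tiling of height $b_i$ above the bar, or (b) places a single rook in one of the $x$ cells below the bar in column $i$.

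First I would evaluate $\Sigma(B_x)$ column by column. For column $i$, the contribution from option (a) is $\sum_{T \in \mathcal{FT}_{b_i}} q^{\mathrm{one}(T)} p^{\mathrm{two}(T)} = F_{b_i}(p,q)$ by the very definition of $F_{b_i}(p,q)$, while the contribution from option (b) is $x$ (one term of weight $1$ for each of the $x$ rows below the bar). Since the weight $WF$ factors multiplicatively across columns, independence gives
\[
\Sigma(B_x) = \prod_{i=1}^n \bigl( x + F_{b_i}(p,q) \bigr),
\]
which is the left-hand side of \eqref{Fibfileprod}.

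Next I would recount $\Sigma(B_x)$ by first stratifying placements according to the set $S$ of columns in which option (a) was chosen. If $|S|=k$, then restricting $P$ to the columns in $S$ produces an arbitrary element of $\mathcal{FT}_k(B)$, whose total weighted count is $\mathbf{fT}_k(B,p,q)$; the remaining $n-k$ columns each independently contribute a factor of $x$ from the rook below the bar. Summing over $k$ yields
\[
\Sigma(B_x) = \sum_{k=0}^n \mathbf{fT}_k(B,p,q)\, x^{n-k},
\]
which is the right-hand side of \eqref{Fibfileprod}. Since both expressions are polynomials in $x$ that agree at every positive integer $x$, they agree as polynomials in $\mathbb{Q}(p,q)[x]$.

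There is no real obstacle here beyond bookkeeping: one needs only that $WF(P,p,q)$ factors as a product over columns and that Fibonacci tilings in distinct columns can be chosen independently, both of which are immediate from the definitions. If one wished to avoid the polynomial-identity step, one could instead prove the identity directly by induction on $n$ using Theorem~\ref{thm:Ferrersfilerec}: expand $\prod_{i=1}^n (x+F_{b_i}(p,q)) = (x+F_{b_n}(p,q)) \prod_{i=1}^{n-1}(x+F_{b_i}(p,q))$, apply the inductive hypothesis to the second factor, and match the coefficient of $x^{n-k}$ using the recursion \eqref{Fibtilerec}.
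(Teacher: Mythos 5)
Your proof is correct and is essentially the paper's own argument: the paper states that Theorem \ref{thm:Ferrersfileprod} is proved by computing $\sum_{P \in \mathcal{M}_n(B_x)} WF(P,p,q)$ in two ways, once column by column to get the product $\prod_{i=1}^n (x+F_{b_i}(p,q))$ and once by stratifying according to which columns carry tilings to get $\sum_k \mathbf{fT}_k(B,p,q)x^{n-k}$, exactly as you do (and as is carried out in detail for the $q$-analogue in Theorem \ref{thm:qFerrersfileprod}). Your closing remarks about the polynomial-identity step and the alternative induction via \eqref{Fibtilerec} are also sound.
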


To obtain the desired $q$-analogues for this paper, we 
must define new weight functions 
for mixed placements $P \in \mathcal{M}_n(B_x)$. 
That is, suppose that $P \cap B$ is the Fibonacci tile placement 
$Q = (({i_1},T_{i_1}),  \ldots, ({i_k},T_{i_{n-k}}))$, and suppose that, for the rooks below the bar in columns 
$1 \leq {j_1}<  \ldots {j_k} \leq n$, the rook in column 
$j_s$ is in row $d_{j_s}$ for $s =1, \ldots, k$.  
Then we define
\begin{eqnarray*}
\mathbf{w_{B_x,q}}(P) &=& \mathbf{w_{B,q}}(P)q^{\sum_{t=1}^k d_{j_t} -1} = 
q^{\sum_{s=1}^{n-k} 
\rank_{b_{i_s}}(T_{i_s}) + \sum_{t=1}^k F_{b_{j_t}}+d_{j_t}-1} \ \mbox{and} \\
\overline{\mathbf{w_{B_x,q}}}(P) &=& \overline{\mathbf{w_{B,q}}}(P)q^{\sum_{t=1}^k d_{j_t} -1} = 
q^{\sum_{s=1}^{n-k} 
\rank_{b_{i_s}}(T_{i_s}) + \sum_{t=1}^k d_{j_t}-1}.
\end{eqnarray*}
That is, for each column $i$ the choice of a Fibonacci tiling 
$T_i$ of height $b_i$ above the bar contributes a factor 
of $q^{\rank_{b_i}(T_i)}$ to $\mathbf{w_{B_x,q}}(P)$ and the 
choice of picking 
a row $j$ below the bar to place a rook in the cell in row $j$ 
and column $i$ contributes a factor of 
$q^{F_{b_i}+j-1}$ to $\mathbf{w_{B_x,q}}(P)$. Similarly, 
for each column $b_i$ the choice of a Fibonacci tiling 
$T_i$ of height $b_i$ above the bar contributes a factor 
of $q^{\rank_{b_i}(T_i)}$ to $\overline{\mathbf{w_{B_x,q}}}(P)$ and the 
choice of picking 
a row $j$ below the bar to place a rook in the cell in row $j$ 
and column $i$ contributes a factor of 
$q^{j-1}$ to $\overline{\mathbf{w_{B_x,q}}}(P)$.

Then we have the following analogue of Theorem \ref{thm:Ferrersfileprod}.

\begin{theorem}\label{thm:qFerrersfileprod}
Let $B =F(b_1, \ldots, b_n)$ be a Ferrers 
board where $0 \leq b_1 \leq \cdots \leq b_n$ and $b_n > 0$. 
Then for all positive integers $x$, 
\begin{equation}\label{qFibfileprod}
[x+F_{b_1}]_q [x+F_{b_2}]_q \cdots [x+F_{b_n}]_q =
\sum_{k=0}^n \mathbf{FT}_k(B,q) [x]_q^{n-k} 
\end{equation}
and 
\begin{equation}\label{ovqFibfileprod}
([x]_q+[F_{b_1}]_q) ([x]_q+[F_{b_2}]_q) \cdots ([x]_q+[F_{b_n}]_q) =
\sum_{k=0}^n \overline{\mathbf{FT}}_k(B,q) [x]_q^{n-k} 
\end{equation}
\end{theorem}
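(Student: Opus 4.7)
The plan is to mimic the proof of Theorem \ref{thm:Ferrersfileprod} by counting $\sum_{P \in \mathcal{M}_n(B_x)} \mathbf{w_{B_x,q}}(P)$ and $\sum_{P \in \mathcal{M}_n(B_x)} \overline{\mathbf{w_{B_x,q}}}(P)$ in two different ways: first column-by-column and then by classifying mixed placements by the set of columns containing a tiling. The key algebraic identity driving the first equation is $[a+b]_q = [a]_q + q^{a}[b]_q$, which specializes to
\[
[x+F_{b_i}]_q = [F_{b_i}]_q + q^{F_{b_i}}[x]_q.
\]
For the second identity the factorization is simply the sum $[x]_q + [F_{b_i}]_q$ with no shift.

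For the column-by-column side of \eqref{qFibfileprod}, I would interpret $[F_{b_i}]_q = \sum_{T \in \mathcal{F}_{b_i}} q^{\rank_{b_i}(T)}$ as enumerating Fibonacci tilings of column $i$ above the bar, while $q^{F_{b_i}}[x]_q = \sum_{d=1}^{x} q^{F_{b_i}+d-1}$ enumerates choosing row $d$ below the bar for a rook in column $i$. Multiplying out $\prod_{i=1}^n [x+F_{b_i}]_q$ therefore yields $\sum_{P \in \mathcal{M}_n(B_x)} \mathbf{w_{B_x,q}}(P)$, because each column contributes exactly the factor required by the definition of $\mathbf{w_{B_x,q}}$. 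The analogous reading of $[x]_q + [F_{b_i}]_q$ with $[x]_q = \sum_{d=1}^{x} q^{d-1}$ (rook in row $d$ weighted by $q^{d-1}$) matches $\overline{\mathbf{w_{B_x,q}}}$ column by column and gives the left-hand side of \eqref{ovqFibfileprod}.

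For the classification side, fix $k$ and a Fibonacci file placement $Q = ((i_1,T_{i_1}), \ldots, (i_k,T_{i_k})) \in \mathcal{FT}_k(B)$, and let $j_1 < \cdots < j_{n-k}$ be the remaining columns, which must receive a rook below the bar at some row $d_{j_t} \in \{1, \dots, x\}$. Summing over the independent choices of rook rows gives, for the first model,
\[
\sum_{d_{j_1},\dots,d_{j_{n-k}}=1}^{x} q^{\sum_s \rank_{b_{i_s}}(T_{i_s}) + \sum_t (F_{b_{j_t}} + d_{j_t}-1)} = \mathbf{w_{B,q}}(Q)\, [x]_q^{\,n-k},
\]
since the factors $q^{F_{b_{j_t}}}$ are precisely those contributed to $\mathbf{w_{B,q}}(Q)$ by the empty columns. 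Summing over $Q \in \mathcal{FT}_k(B)$ yields $\mathbf{FT}_k(B,q)[x]_q^{n-k}$, and summing over $k$ gives $\sum_{k=0}^n \mathbf{FT}_k(B,q)[x]_q^{n-k}$. The same argument with $\overline{\mathbf{w_{B_x,q}}}$ (which omits the $q^{F_{b_{j_t}}}$ factors on empty columns) produces $\overline{\mathbf{FT}}_k(B,q)[x]_q^{n-k}$, establishing \eqref{ovqFibfileprod}.

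The step that requires the most care is verifying that the $q^{F_{b_i}}$ factors land in exactly the right places, namely on empty columns for $\mathbf{w_{B,q}}$ and nowhere for $\overline{\mathbf{w_{B,q}}}$, so that the two product expansions $\prod_i [x+F_{b_i}]_q$ versus $\prod_i ([x]_q+[F_{b_i}]_q)$ correctly bookkeep the two weighting schemes. Once this matching is confirmed the argument is purely a double counting of mixed placements and both identities fall out at once.
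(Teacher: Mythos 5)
Your proposal is correct and follows essentially the same double-counting argument as the paper: both evaluate $\sum_{P \in \mathcal{M}_n(B_x)} \mathbf{w_{B_x,q}}(P)$ (and its barred analogue) column-by-column via $[x+F_{b_i}]_q = [F_{b_i}]_q + q^{F_{b_i}}[x]_q$ on one hand, and by conditioning on the underlying Fibonacci file placement on the other. The bookkeeping of the $q^{F_{b_i}}$ factors on empty columns is handled exactly as in the paper's proof.
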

\begin{proof}
To prove (\ref{qFibfileprod}),   
fix $x$ to be a positive integer and consider 
the sums 
\begin{eqnarray*}
S&=&\sum_{P \in \mathcal{M}_n(B_x)} \mathbf{w_{B_x,q}}(P) \ \mbox{and} \\ 
\overline{S} &=&\sum_{P \in \mathcal{M}_n(B_x)} 
\overline{\mathbf{w_{B_x,q}}}(P). 
\end{eqnarray*}

For $S$, in  a given column $i$, our choice of the Fibonacci tiling 
of height $b_i$ will contribute a factor 
of $\sum_{T \in \mathcal{F}_n}q^{\rank_{b_i}(T)} =[F_{b_i}]_q$ 
to $S$.  Our choice of placing a rook below the bar in 
column $i$ contribute a factor of 
$$\sum_{j=1}^x q^{F_{b_i}+j-1} = q^{F_{b_i}}(1+q+q^2 + \cdots q^{x-1}) =
q^{F_{b_i}}[x]_q$$ 
to $S$.  As $[F_{b_i}]_q + q^{F_{b_i}}[x]_q =[x+F_{b_i}]_q$,   
each column of $b_i$ of $B$ contributes 
a factor of $[x+ F_{b_i}]_q$ to $S$ so that 
$$S = \prod_{i=1}^n [x +  F_{b_i}]_q.$$

For $\overline{S}$, in  a given column $i$, our choice of the Fibonacci tiling 
of height $b_i$ will contribute a factor 
of $\sum_{T \in \mathcal{F}_n}q^{\rank_{b_i}(T)} =[F_{b_i}]_q$ 
to $S$.  Our choice of placing a rook below the bar in 
column $i$ contribute a factor of 
$$\sum_{j=1}^x q^{j-1} = [x]_q$$ 
to $\overline{S}$. Thus each column $b_i$ contributes 
a factor of $[x]_q+ [F_{b_i}]_q$ to $S$ so that 
$$S = \prod_{i=1}^n ([x]_q + [F_{b_i}]_q).$$

On the other 
hand, suppose that we fix a Fibonacci file placement  
$P \in \mathcal{FT}_k(B)$. 
Then we want to compute $S_P = \sum_{Q  \in \mathcal{M}_n(B), 
Q \cap B = P} \mathbf{w_{B_x,q}}(Q)$ which is the sum of 
$\mathbf{w_{B_x,q}}(Q)$ over all 
mixed placements $Q$ such that $Q$ intersect $B$ equals $P$. 
It it easy to see that such a $Q$ arises by choosing 
a rook to be placed below the bar for each column 
that does not contain a tiling.  Each such column contributes 
a factor of $1+q+ \cdots +q^{x-1} =[x]_q$ in addition 
to the weight $\mathbf{w_{B,q}}(P)$. Thus it follows that 
$\displaystyle S_P =\mathbf{w_{B,q}}(P) [x]_q^{n-k}$. 
Hence it follows that 
\begin{eqnarray*}
S &=&  \sum_{k=0}^n \sum_{P \in \mathcal{FT}_k(B)} S_P \\
&=& \sum_{k=0}^n [x]_q^{n-k} \sum_{P \in \mathcal{FT}_k(B)} 
\mathbf{w_{B,q}}(P) \\
&=& \sum_{k=0}^n \mathbf{FT}_k(B,q) \ [x]_q^{n-k}.
\end{eqnarray*}

The same argument will show that 
\begin{equation*}
\overline{S} = \sum_{k=0}^n \overline{\mathbf{FT}}_k(B,q) \ [x]_q^{n-k}.
\end{equation*}

\end{proof}

Now consider the special case of the previous two theorems 
when $B_n = F(0,1,2, \ldots, n-1)$. Then (\ref{qFibtilerec}) implies 
that 
$$\mathbf{FT}_{n+1-k}(B_{n+1},q) = q^{F_n}\mathbf{FT}_{n+1-k}(B_n,p,q) + 
[F_n]_q 
\mathbf{FT}_{n-k}(B_n,q).$$
It then easily follows that for all $0 \leq k \leq n$, 
\begin{equation}\label{cnkpqinterp}
 \mathbf{cF}_{n,k}(q) = \mathbf{FT}_{n-k}(B_n,q).
\end{equation}
Note that $\mathbf{cF}_{n,0}(q) = 0$ for all $n \geq 1$ since 
there are no Fibonacci file placements in 
$\mathcal{FT}_n(B_n)$ since there are only $n-1$ non-zero columns. 
Moreover such a situation, we see that (\ref{cnkpqinterp}) 
implies that 
\begin{equation*}
[x]_q [x+F_1]_q [x+F_2]_q \cdots [x+F_{n-1}]_q = 
\sum_{k=1}^n \mathbf{cF}_{n,k}(q) \ [x]_q^k.
\end{equation*}
Thus we have given a combinatorial proof of 
(\ref{cFdef}).

Similarly (\ref{ovqFibtilerec}) implies 
that 
$$\overline{\mathbf{FT}}_{n+1-k}(B_{n+1},q) = \overline{\mathbf{FT}}_{n+1-k}(B_n,p,q) + 
[F_n]_q 
\overline{\mathbf{FT}}_{n-k}(B_n,q).$$
It then easily follows that for all $0 \leq k \leq n$, 
\begin{equation}\label{Snkpqinterp}
 \overline{\mathbf{cF}}_{n,k}(q) = \overline{\mathbf{FT}}_{n-k}(B_n,q).
\end{equation}
Moreover such a situation, we see that (\ref{Snkpqinterp}) 
implies that 
\begin{equation*}
[x]_q ([x]_q+[F_1]_q) ([x]_q+[F_2]_q) \cdots ([x]_q+[F_{n-1}]_q) = 
\sum_{k=1}^n \overline{\mathbf{cF}}_{n,k}(q) \ [x]_q^k.
\end{equation*}
Thus we have given a combinatorial proof of 
(\ref{ovcFdef}).

The Fibonacci analogue of rook  placements defined in 
\cite{BPR} is a slight 
variation of Fibonacci file placements. The main 
difference is that each 
tiling will cancel some of the top most cells in each column 
to its right that has not been canceled by a tiling 
which is further to the left.  Our goal is to ensure 
that if we start with a Ferrers board $B =F(b_1, \ldots, b_n)$, 
our cancellation scheme will ensure that the number of 
uncanceled cells in the empty columns are $b_1, \ldots, b_{n-k}$, 
reading from left to right. 
That is, if $B=F(b_1, \ldots, b_n)$, then we let 
$\mathcal{NT}_k(B)$ denote the set of all configurations such that 
that there are $k$ columns $(i_1, \ldots, i_k)$ of $B$ where 
$1 \leq i_1 < \cdots < i_k \leq n$ such that the 
following conditions hold. \begin{itemize}
\item[1.] In column ${i_1}$, we place a Fibonacci tiling 
$T_{i,1}$ of height $b_{i_1}$ and for each $j > i_1$, 
this tiling cancels the top $b_j-b_{j-1}$ cells at the top 
of column $j$. This cancellation has the effect of 
ensuring that the number of uncanceled cells in the columns 
without tilings at this point is 
$b_1, \ldots, b_{n-1}$, reading from left to right. 
\item[2.] In column ${i_2}$, our cancellation due to the tiling 
in column $i_1$ ensures that there are $b_{i_2-1}$ uncanceled 
cells in column $i_2$. Then we place a Fibonacci tiling 
$T_{i,2}$ of height $b_{i_2-1}$ and for each $j > i_2$, 
we cancel the top $b_{j-1}-b_{j-2}$ cells in column $j$ 
that has not been canceled by the tiling in column $i_1$. 
This cancellation has the effect of 
ensuring that the number of uncanceled cells in columns 
without tilings at this point 
is $b_1, \ldots, b_{n-2}$, reading from left to right.
\item[3.] In general, when we reach column $i_s$, we assume 
that the cancellation due to the tilings in columns  
$i_1, \ldots, i_{j-1}$ ensure that the number of uncanceled 
cells in the columns without tilings is $b_1, \ldots, b_{n-(s-1)}$, 
reading from left to right. Thus there will be 
$b_{i_s -(s-1)}$ uncanceled cells in column $i_s$ at this point. 
Then we place a Fibonacci tiling 
$T_{i,s}$ of height $b_{i_s-(s-1)}$ and for each $j > i_s$, 
this tiling will 
cancel the top $b_{j-(s-1)}-b_{j-s}$ cells in column 
$j$ that has not been canceled by the tilings in 
columns $i_1, \ldots, i_{s-1}$. 
This cancellation has the effect of 
ensuring that the number of uncanceled cells in columns 
without tilings at this point 
is $b_1, \ldots, b_{n-s}$, reading from left to right.
\end{itemize}

We shall call such a configuration 
a Fibonacci rook placement and denote it by 
$$P = (({i_1},T_{i_1}),  \ldots, ({i_k},T_{i_k})).$$ 
Let $\mathrm{one}(P)$ denote the number of tiles of height 1 that appear 
in $P$ and $\mathrm{two}(P)$ denote the number of tiles of height 2 that appear 
in $P$.  Then in \cite{BPR}, we  defined the weight of $P$, $WF(P,p,q)$, to be 
$q^{\mathrm{one}(P)}p^{\mathrm{two}(P)}$.  For example, on the left in 
Figure \ref{fig:Fibrook}, we have 
pictured an element $P$ of $\mathcal{NT}_3(F(2,3,4,4,6,6))$ 
whose weight is $q^5 p^2$. In 
Figure \ref{fig:Fibrook}, we have 
indicated the canceled cells by the tiling in column 
$i$ by placing an $i$ in the cell.  
We note in the special case where $B = F(0,k,2k, \ldots, (n-1)k)$, then 
our cancellation scheme is quite simple. That is, each tiling 
just cancels the top $k$ cells in each column to its right which 
has not been canceled by tilings to its left. 
For example, on the right in 
Figure \ref{fig:Fibrook}, we have 
pictured an element $P$ of $\mathcal{NT}_3(F(0,1,2,3,4,5))$ 
whose weight is $q^6 p$. Again, we have 
indicated the canceled cells by the tiling in column 
$i$ by placing an $i$ in the cell.

\fig{Fibrook}{A Fibonacci rook placement.}

We define the $k$-th $p,q$-Fibonacci rook polynomial of $B$, $\mathbf{rT}_k(B,p,q)$, 
by setting 
\begin{equation*}
 \mathbf{rT}_k(B,p,q) = \sum_{P \in \mathcal{NT}_k(B)} WF(P,p,q).
\end{equation*}
If $k =0$, then the only element of $\mathcal{FT}_k(B)$ is the empty placement 
whose weight by definition is 1.

Then in \cite{BPR}, we proved the following two theorems concerning 
Fibonacci rook placements in Ferrers boards. 

\begin{theorem}\label{thm:Ferrersrookrec} 
Let $B =F(b_1, \ldots, b_n)$ be a Ferrers 
board where $0 \leq b_1 \leq \cdots \leq b_n$ and $b_n > 0$. 
Let $B^- = F(b_1, \ldots, b_{n-1})$. Then for all 
$1 \leq k \leq n$, 
\begin{equation}\label{Fibrookrec}
\mathbf{rT}_k(B,p,q) = \mathbf{rT}_{k}(B^-,p,q)+ F_{b_{n-(k-1)}}(p,q) \mathbf{rT}_{k-1}(B^-,p,q).
\end{equation}
\end{theorem}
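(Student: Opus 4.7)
The plan is to classify placements in $\mathcal{NT}_k(B)$ according to whether or not the last column (column $n$) contains a Fibonacci tiling, mirroring the proof strategy used for Theorem \ref{thm:Ferrersfilerec} but with careful attention paid to the cancellation scheme.

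First I would handle the case where column $n$ contains no tiling. Since $b_n > 0$ but we place no tiling in column $n$, deleting column $n$ simply produces a Fibonacci rook placement $P'$ in $\mathcal{NT}_k(B^-)$: the $k$ tilings and their cancellations in columns $1, \ldots, n-1$ are entirely unaffected by the removal of the rightmost column. Since the tiles of $P$ and $P'$ coincide, $WF(P,p,q) = WF(P',p,q)$. This case therefore contributes $\mathbf{rT}_k(B^-, p,q)$ to $\mathbf{rT}_k(B,p,q)$.

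Next I would handle the case where column $n$ does contain a tiling. Here the key observation, which follows directly from the inductive description of the cancellation scheme in the definition of $\mathcal{NT}_k(B)$, is that after the tilings in the other $k-1$ columns (all strictly to the left of column $n$) are placed and their cancellation is applied, the number of uncanceled cells in the columns without tilings — reading left to right — equals $b_1, b_2, \ldots, b_{n-(k-1)}$. In particular, column $n$ has exactly $b_{n-(k-1)}$ uncanceled cells, so the tiling in column $n$ must be a Fibonacci tiling of height $b_{n-(k-1)}$, and this tiling cancels no cells to its right. Deleting column $n$ together with its tiling thus yields an element $P'' \in \mathcal{NT}_{k-1}(B^-)$, and the remaining $k-1$ tilings together with their cancellation pattern are exactly the ones that would be produced directly on $B^-$. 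The contribution of this case therefore factors as
\begin{equation*}
\left( \sum_{T \in \mathcal{FT}_{b_{n-(k-1)}}} q^{\mathrm{one}(T)} p^{\mathrm{two}(T)} \right) \mathbf{rT}_{k-1}(B^-,p,q) = F_{b_{n-(k-1)}}(p,q) \, \mathbf{rT}_{k-1}(B^-,p,q).
\end{equation*}
Adding the two cases gives the claimed recursion.

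The one step that requires genuine care — and which I expect to be the main obstacle — is verifying the inductive claim that after the leftmost $k-1$ tilings of $P$ are placed, the uncanceled column heights of the remaining columns are precisely $b_1, \ldots, b_{n-(k-1)}$. This is essentially built into the definition of $\mathcal{NT}_k(B)$ (items 1--3 of the definition) but needs to be invoked cleanly; once it is in hand, the identification of the height of the tiling in column $n$ as $b_{n-(k-1)}$ is immediate, and the rest of the argument is a straightforward weight-preserving bijection.
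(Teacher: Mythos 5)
Your proof is correct and is essentially the argument the authors intend: the theorem is quoted from \cite{BPR}, and the natural proof there (as with the analogous file-placement recursion proved in this paper) is exactly your classification by whether column $n$ carries a tiling, with the cancellation scheme guaranteeing that a tiling in the last column has height $b_{n-(k-1)}$ and hence contributes the factor $F_{b_{n-(k-1)}}(p,q)$. The key inductive fact you flag is indeed built into items 1--3 of the definition of $\mathcal{NT}_k(B)$, so no further work is needed.
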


\begin{theorem}\label{thm:Ferrersrookprod} 
Let $B =F(b_1, \ldots, b_n)$ be a Ferrers 
board where $0 \leq b_1 \leq \cdots \leq b_n$ and $b_n > 0$. 
\begin{equation}\label{Fibrookprod}
x^n =
\sum_{k=0}^n \mathbf{rT}_{n-k}(B,p,q) (x-F_{b_1}(p,q))(x-F_{b_2}(p,q))\cdots 
(x-F_{b_{k}}(p,q)).
\end{equation}
\end{theorem}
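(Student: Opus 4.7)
The plan is to prove this polynomial identity by induction on $n$, with the recursion from Theorem~\ref{thm:Ferrersrookrec} as the driving engine. The base case $n = 0$ is immediate: both sides equal $1$ (the empty product on the right combined with $\mathbf{rT}_0(B,p,q) = 1$).

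For the inductive step, assume the identity for $B^- = F(b_1, \ldots, b_{n-1})$:
\begin{equation*}
x^{n-1} = \sum_{j=0}^{n-1} \mathbf{rT}_{n-1-j}(B^-, p, q) \prod_{i=1}^{j}(x - F_{b_i}(p,q)).
\end{equation*}
Multiply both sides by $x$; in each term of the resulting sum (indexed by $j$), split the extra $x$ factor as $(x - F_{b_{j+1}}(p,q)) + F_{b_{j+1}}(p,q)$. This produces two sums, one in which each term receives a fresh factor $(x - F_{b_{j+1}})$ and one in which each term receives a bare factor $F_{b_{j+1}}$. After reindexing so that both sums are grouped by the number $k$ of $(x - F_{b_i})$ factors ($k = j+1$ in the first, $k = j$ in the second), the coefficient of $\prod_{i=1}^k (x - F_{b_i}(p,q))$ becomes
\begin{equation*}
\mathbf{rT}_{n-k}(B^-, p, q) + F_{b_{k+1}}(p,q) \cdot \mathbf{rT}_{n-k-1}(B^-, p, q),
\end{equation*}
which by Theorem~\ref{thm:Ferrersrookrec} (applied with its parameter equal to $n - k$, so that $n - ((n-k) - 1) = k+1$) collapses to $\mathbf{rT}_{n-k}(B, p, q)$, producing exactly the desired identity.

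The main obstacle is careful bookkeeping of indices: I must verify that the factor $F_{b_{j+1}}$ that arises when distributing $x$ aligns with the factor $F_{b_{n-(k-1)}}$ in the recursion, under the reindexing $k = n - j$, and then handle the boundary terms, namely $k = 0$ (where only the $F_{b_1}\mathbf{rT}_{n-1}(B^-)$ piece contributes, which equals $\mathbf{rT}_n(B)$ since $\mathbf{rT}_n(B^-) = 0$) and $k = n$ (where $\mathbf{rT}_0(B^-) = 1 = \mathbf{rT}_0(B)$). A purely combinatorial alternative---a sign-reversing involution on mixed placements of $B_x$ that augments each Fibonacci rook placement by either a below-bar rook (contributing $+x$) or a signed secondary tiling in the uncanceled portion of each empty column (contributing $-F_{b_i}$)---is conceivable, but the shifting heights forced by the cancellation scheme make such an involution considerably more delicate than the clean inductive argument above.
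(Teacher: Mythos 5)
Your proof is correct, and it takes a genuinely different route from the one the paper relies on. The paper quotes this theorem from \cite{BPR} without reproving it, but the method used there --- and the method this paper uses for the $q$-analogues in Theorems \ref{thm:ovqFerrersrookprod} and \ref{thm:qFerrersrookprod} --- is precisely the ``combinatorial alternative'' you set aside: one counts $\sum_P WF(P,p,q)$ over mixed placements of the augmented board $AugB_x$ in two ways, where each tiling-free column of a fixed $P \in \mathcal{NT}_{n-k}(B)$ is completed either by a rook between the bars (contributing $x$) or by a negatively weighted flipped tiling in $\overline{B}$ (contributing $-F_{b_i}(p,q)$); the cancellation scheme is exactly what guarantees those flipped tilings have heights $b_1,\ldots,b_k$, so the delicacy you feared is already absorbed into the definition of $AugB_x$. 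Your inductive argument instead derives the identity purely algebraically from the recursion of Theorem \ref{thm:Ferrersrookrec}, and the index bookkeeping checks out: with $m=n-k$ the recursion's factor $F_{b_{n-(m-1)}}=F_{b_{k+1}}$ matches the factor produced by splitting $x=(x-F_{b_{k+1}})+F_{b_{k+1}}$, and the boundary terms $k=0$ (using $\mathbf{rT}_n(B^-)=0$) and $k=n$ (using $\mathbf{rT}_0=1$) behave as you say. The only point worth tightening is the induction's base/degenerate case: $B^-$ may fail the hypothesis $b_{n-1}>0$, so you should note that the identity holds trivially for boards all of whose columns have height $0$ (only the $k=n$ term survives, since no tilings fit in height-$0$ columns and $F_0=0$). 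The trade-off is the usual one: your proof is shorter and self-contained given the recursion, while the paper's double count explains combinatorially why the product formula holds, yields the identity for all positive integers $x$ (hence as polynomials), and is the version that transfers directly to the two $q$-weightings that are the subject of this paper.
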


To obtain the $q$-analogues that we want for this paper, we need to define two 
new weight functions on Fibonacci rook tilings. 
That is, suppose that $B =F(b_1, \ldots, b_n)$ is a Ferrers board 
and $P = (({i_1},T_{i_1}),  \ldots, ({i_k},T_{i_k}))$ is an Fibonacci 
rook tiling in $\mathcal{NT}_k(B)$. Then we know 
that the number of uncanceled cells in the $n-k$ columns 
which do not have tilings are $b_1, \ldots ,b_{n-k}$ reading from 
left to right. Suppose that the number of uncanceled cells in 
the columns with tilings are $e_1, \ldots, e_k$ reading from left 
to right so that tiling $T_{i_j}$ is of height $e_j$ for $j =1, 
\ldots, k$.  The we define 
\begin{eqnarray*}
\mathbf{W_{B,q}}(P) &=& q^{\sum_{s=1}^k \rank_{e_s}(T_{i_s}) + 
\sum_{t=1}^{n-k} F_{b_t}} \ \mbox{and} \\
\overline{\mathbf{W_{B,q}}}(P) &=& q^{\sum_{s=1}^k \rank_{e_s}(T_{i_s})}.
\end{eqnarray*} 
For example, if $B=(2,3,4,4,5,5)$ and $P=((1,T_1),(3,T_3),(5,T_5))$ is the rook tiling pictured 
in Figure \ref{fig:Fibrook}, then $e_1 =2$, $e_2 = 3$ and $e_3 =4$ and 
one can check that $\rank_2(T_1) =0$, $\rank_3(T_3) = F_2 =1$, and \
$\rank_4(T_5) = F_3 = 2$. Thus 
$\mathbf{W_{B,q}}(P)=q^{0+1+2+F_2+F_3+F_4} = q^{9}$ and 
$\overline{\mathbf{W_{B,q}}}(P)=q^{0+1+2} = q^{3}$.
If $k =0$, then the only element of $\mathcal{FT}_k(B)$ is the empty placement 
$\emptyset$ which means that $\mathbf{W_{B,q}}(\emptyset) = 
q^{\sum_{i=1}^n F_{b_i}}$ and $\mathbf{W_{B,q}}(\emptyset) =1$.

Then we define $\mathbf{RT}_k(B,q)$  
by setting 
\begin{equation*}
 \mathbf{RT}_k(B,q) = \sum_{P \in \mathcal{NT}_k(B)} \mathbf{W_{B,q}}(P)
\end{equation*}
and 
\begin{equation*}
 \overline{\mathbf{RT}}_k(B,q) = \sum_{P \in \mathcal{NT}_k(B)} 
\overline{\mathbf{W_{B,q}}}(P).
\end{equation*}
Note that because of our cancellation scheme, there is a very 
simple relationship between $\mathbf{RT}_k(B,q)$ and 
$\overline{\mathbf{RT}}_k(B,q)$ in the case where $B = F(b_1, \ldots, 
b_n)$. That is, in any placement $P \in \mathcal{NT}_k(B)$, 
the empty columns have $b_1, \ldots, b_{n-k}$ uncanceled cells, 
reading from left to right, so that 
\begin{equation}\label{rel} 
\mathbf{RT}_k(B,q) = q^{\sum_{i=1}^{n-k} F_{b_i}}\overline{\mathbf{RT}}_k(B,q).
\end{equation}

Let $B=F(b_1, \ldots, b_n)$ be  a Ferrers board and 
$x$ be a positive integer. 
Then we let $AugB_x$ denote the board where 
we start with $B_x$ and add the flip of the board $B$ about 
its baseline below the board. We shall call the 
the line that separates $B$ from these $x$ rows the {\em upper bar} 
and the line that separates the $x$ rows from the flip 
of $B$ added below the $x$ rows the {\em lower bar}. We shall 
call the flipped version of $B$ added below $B_x$ the board 
$\overline{B}$. For example, 
if $B=F(2,3,4,4,5,5)$, then the board $AugB_7$ is pictured 
in Figure \ref{fig:aug}.

\fig{aug}{An example of an augmented board $AugB_x$.}

The analogue of mixed placements in $AugB_x$ are 
more complex than the mixed placements for $B_x$. We process 
the columns from left to right. 
If we are in column 1, then we can do one of the following three things. \begin{itemize}
\item[i.] We can put a Fibonacci tiling in  
cells in the first column in $B$.  Then we must 
cancel the top-most cells in each of the columns in $B$ to its right 
so that the number of uncanceled cells in 
the columns to its right are $b_1,b_2, \ldots, b_{n-1}$, respectively, as 
we read from left to right. This means 
that we will cancel $b_i-b_{i-1}$ at the top of column $i$ in $B$ 
for $i=2, \ldots, n$. We also cancel the same number 
of cells at the bottom of the corresponding columns of $\overline{B}$.
\item[ii.] We can place a rook in any row of column $1$ that 
lies between the upper bar and lower bar.  This rook 
will not cancel anything. 
\item[iii.] We can put a flip of Fibonacci tiling 
in column $1$ of $\overline{B}$.  This tiling will not 
cancel anything. \end{itemize}

Next assume that when we get to column $j$, the 
number of uncanceled cells in the columns that have 
no tilings in $B$ and $\overline{B}$ are $b_1, \ldots, b_k$  for some $k$ as 
we read from left to right. Suppose there are 
$b_i$ uncanceled cells in $B$ in column $j$.  
Then we can do one of three things. \begin{itemize}
\item[i.] We can put a Fibonacci tiling of height $b_i$  in the uncanceled cells in column $j$ in $B$.  Then we must 
cancel top-most cells of the columns in $B$ to its right 
so that the number of uncanceled cells in 
the columns which have no tilings up to this point 
are $b_1,b_2, \ldots, b_{k-1}$, 
We also cancel the same number 
of cells at the bottom of the corresponding columns of $\overline{B}$
\item[ii.] We can place a rook in any row of column $j$ that 
lies between the upper bar and lower bar.  This rook 
will not cancel anything. 
\item[iii.] We can put a flip of Fibonacci tiling in the $b_i$ 
uncanceled cells in column $j$ of $\overline{B}$.  This tiling will not 
cancel anything \end{itemize}

We let $\mathcal{M}_n(AugB_x)$ denote set of all 
mixed rook placements on $AugB_x$. For any placement 
$P \in \mathcal{M}_n(AugB_x)$, 
we define $\mathbf{W_{AugB_x,q}}(P)$ and $\overline{\mathbf{W_{AugB_x,q}}}(P)$
as follows. For any column $i$, suppose that 
the number of uncanceled cells in $B$ in column $i$ is $t_i$. 
Then the factor $\mathbf{W_{i,AugB_x,q}}(P)$ that the placement 
in column $i$ contributes to $\mathbf{W_{AugB_x,q}}(P)$ is 
\begin{enumerate}
\item $q^{\rank_{t_i}(T_i)}$ if there is tiling $T_i$ in $B$ in 
column $i$, 
\item $q^{F_{t_i}+s_i-1}$ if there is a rook in row $s_i^{th}$ row from 
the top in the $x$ rows that lie between the upper bar and lower bar, 
and 
\item $-q^{\rank_{t_i}(T_i)}$ if there is a flip of a tiling $T_i$ 
in column $i$ of $\overline{B}$.
\end{enumerate}
Then we define 
$$\mathbf{W_{AugB_x,q}}(P) = \prod_{i=1}^n \mathbf{W_{i,AugB_x,q}}(P).$$

Similarly, the factor $\overline{\mathbf{W_{i,AugB_x,q}}}(P)$ 
that the tile placement 
in column $i$ contributes to $\overline{\mathbf{W_{AugB_x,q}}}(P)$ is  
\begin{enumerate}
\item $q^{\rank_{t_i}(T_i)}$ if there is tiling $T_i$ in $B$ in 
column $i$, 
\item $q^{s_i-1}$ if there is a rook in row $s_i^{th}$ row from 
the top in the $x$ rows that lie between the upper bar and lower bar, 
and 
\item $-q^{\rank_{t_i}(T_i)}$ if there is a flip of a tiling $T_i$ 
in column $i$ of $\overline{B}$.
\end{enumerate}
Then we define 
$$\overline{\mathbf{W_{AugB_x,q}}}(P) = \prod_{i=1}^n 
\overline{\mathbf{W_{i,AugB_x,q}}}(P).$$

For example, 
Figure \ref{fig:aug2} pictures a mixed placement $P$ in 
$AugB_x$ where $B = F(2,3,4,4,5,5)$ and $x$ is 7 where  
$\rank_2(T_1) =0$,  $\rank_4(T_4) =F_2 =1$, 
and $\rank_4(T_5) =F_3 =2$ where $T_i$ is the tiling in 
column $i$ for $i \in \{1,4,5\}$.  The rooks columns 2 and 6 
are in row 5 and the rook in column 3 is in row 3 so that $s_2 =s_6 =5$ 
and $s_3=3$. Thus 
\begin{eqnarray*}
\mathbf{W_{AugB_x,q}}(P) &=& -q^{0+(4+F_2)+(2+F_3)+1+2+(4+F_4)} = -q^{19}
\ \mbox{and} \\
\overline{\mathbf{W_{AugB_x,q}}}(P) &=& 
-q^{0+(4)+(2)+1+2+(4)} = -q^{13}
\end{eqnarray*}

\fig{aug2}{A mixed rook placement.}

Our next theorem results from counting 
$\sum_{P \in \mathcal{M}_n(AugB_x)} \overline{\mathbf{W_{AugB_x,q}}}(P)$ 
in two different ways. 

\begin{theorem}\label{thm:ovqFerrersrookprod} 
Let $B =F(b_1, \ldots, b_n)$ be a Ferrers 
board where $0 \leq b_1 \leq \cdots \leq b_n$ and $b_n > 0$ 
and $x \in \Pos$. Then 
\begin{equation}\label{ovqFibrookprod}
\ [x]_q^n =
\sum_{k=0}^n \overline{\mathbf{RT}}_{n-k}(B,q) 
([x]_q-[F_{b_1}]_q) ([x]_q-[F_{b_2}]_q)\cdots 
([x]_q-[F_{b_k}]_q).
\end{equation}
\end{theorem}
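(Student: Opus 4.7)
The plan is to follow the blueprint already used in the proof of Theorem~\ref{thm:qFerrersfileprod}, namely to compute the sum
$$T \;=\; \sum_{P \in \mathcal{M}_n(AugB_x)} \overline{\mathbf{W_{AugB_x,q}}}(P)$$
in two different ways, where one evaluation yields $[x]_q^n$ and the other yields the right-hand side of \eqref{ovqFibrookprod}.

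First I would evaluate $T$ column-by-column from left to right. Process the columns in order, and observe that after the decisions in columns $1,\ldots,i-1$ are fixed, the number $t_i$ of uncanceled cells in column $i$ of $B$ is determined, and by the symmetric cancellation rules the same number of cells is uncanceled in column $i$ of $\overline{B}$. Given $t_i$, the three allowed choices in column $i$ contribute the following factors to $\overline{\mathbf{W_{AugB_x,q}}}(P)$, summed over their internal options:
\begin{eqnarray*}
\text{tiling in $B$:} &&\ \sum_{T\in\mathcal{F}_{t_i}} q^{\rank_{t_i}(T)} \;=\; [F_{t_i}]_q,\\
\text{rook between the bars:} &&\ \sum_{s=1}^{x} q^{s-1} \;=\; [x]_q,\\
\text{flip of a tiling in $\overline{B}$:} &&\ \sum_{T\in\mathcal{F}_{t_i}} -q^{\rank_{t_i}(T)} \;=\; -[F_{t_i}]_q.
\end{eqnarray*}
These sum to $[x]_q$, and since the choice in column $i$ does not affect the choice in any later column beyond determining the value of $t_{i+1}$ (which is the same regardless of which of the three kinds of objects was placed, because a rook causes no cancellation and the tiling and flipped tiling cancel identical sets of cells), the total factor coming from each column is $[x]_q$. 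Multiplying over columns gives $T=[x]_q^n$.

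Next I would evaluate $T$ by first fixing the portion of $P$ that lives inside $B$. That portion is some Fibonacci rook placement $P' \in \mathcal{NT}_{n-k}(B)$ for some $0 \leq n-k \leq n$. By the design of the cancellation scheme, the $k$ empty columns of $P'$ have exactly $b_1,\ldots,b_k$ uncanceled cells reading from left to right. Within each such empty column, with $b_j$ uncanceled cells, the remaining choices are either a rook between the bars or a flipped tiling in $\overline{B}$, contributing $[x]_q - [F_{b_j}]_q$ after summing. Meanwhile, the tilings of $P'$ contribute exactly the weight $\overline{\mathbf{W_{B,q}}}(P')$. Summing first over the completions of $P'$, and then over $P' \in \mathcal{NT}_{n-k}(B)$, yields
$$T \;=\; \sum_{k=0}^n \overline{\mathbf{RT}}_{n-k}(B,q) \, ([x]_q-[F_{b_1}]_q)\cdots([x]_q-[F_{b_k}]_q).$$
Equating the two expressions for $T$ gives \eqref{ovqFibrookprod}.

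The main subtlety, and the only point where one should be careful, is the step ``the $k$ empty columns of $P'$ contain $b_1,\ldots,b_k$ uncanceled cells reading left to right,'' which is not automatic: it is the defining invariant of the Fibonacci rook cancellation scheme described in Section~3, and must be cited rather than reproved. A parallel subtlety is the claim in the column-by-column evaluation that the value of $t_{i+1}$ does not depend on which of the three options was selected in column $i$; this follows because (i) a rook cancels nothing and (ii) an ordinary tiling in column $i$ of $B$ and a flipped tiling in column $i$ of $\overline{B}$ each cancel exactly the same top cells of subsequent columns of $B$ (the flipped tilings only cancel cells of $\overline{B}$, but those do not affect $t_{i+1}$). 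Once these two points are clearly spelled out, the rest of the argument is routine bookkeeping analogous to the proof of Theorem~\ref{thm:qFerrersfileprod}.
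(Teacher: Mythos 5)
Your overall strategy is exactly the paper's: evaluate $\sum_{P \in \mathcal{M}_n(AugB_x)} \overline{\mathbf{W_{AugB_x,q}}}(P)$ once column-by-column to get $[x]_q^n$, and once by conditioning on $Q \cap B \in \mathcal{NT}_{n-k}(B)$ to get the right-hand side, and both of your evaluations land on the correct expressions. However, the parenthetical you single out as a ``parallel subtlety'' --- that $t_{i+1}$ does not depend on which of the three options was chosen in column $i$, because a tiling in $B$ and a flipped tiling in $\overline{B}$ cancel the same cells --- is false in the model as defined in Section~3: only tilings placed in $B$ cancel cells in the columns to their right (in both $B$ and, correspondingly, $\overline{B}$), while rooks and flipped tilings in $\overline{B}$ cancel nothing. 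Consequently $t_{i+1}$ genuinely does depend on how many of the earlier columns received tilings in $B$. This does not damage the first evaluation, because the factorization only needs that, whatever value $t_j$ takes, the three choices in column $j$ contribute $[F_{t_j}]_q + [x]_q - [F_{t_j}]_q = [x]_q$; summing over the columns from the rightmost inward then yields $[x]_q^n$, which is precisely how the paper argues it (``the number of uncanceled cells \ldots is $b_i$ for some $i \leq j$''). You should simply delete the independence claim rather than try to prove it: if flipped tilings really did cancel cells of subsequent columns, your second evaluation would break, since a flipped tiling in one empty column of $P'$ would alter the number of uncanceled cells, and hence the admissible tiling height, in the empty columns to its right, and the sum over completions would no longer factor as $\prod_{j}\left([x]_q - [F_{b_j}]_q\right)$. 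With that one justification repaired, your proof coincides with the paper's.
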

\begin{proof}
Fix $x$ to be a positive integer and consider 
the sum $S=\sum_{P \in \mathcal{M}_n(AugB_x)} 
\overline{\mathbf{W_{AugB_x,q}}}(P)$. 
First we consider the contribution of each column as we proceed 
from left to right. Given our three choices 
in column 1, the contribution of our choice of the tilings of 
height $b_1$ in column 1 of $B$ is $[F_{b_1}]_q$, the choice 
of placing a rook in between the upper  bar and the lower is $[x]_q$, 
and the contribution of our choice of the tilings of 
height $b_1$ in column 1 of $\overline{B}$ is $-[F_{b_1}]_q$. 
Thus the contribution of our choices in 
column 1 to $S$ is $[F_{b_1}]_q+[x]_q -[F_{b_1}]_q = [x]_q$. 

In general, after we have processed our choices in 
the first $j$ columns, our cancellation scheme ensures 
that the number of uncanceled cells in $B$ and $\overline{B}$ in 
the $j$-th column is $b_i$ for some $i \leq j$. 
Thus given our three choices 
in column j, the contribution of our choice of the tilings of 
height $b_i$ in column $j$ of $B$ is $[F_{b_i}]_q$, the choice 
of placing a rook in between the upper  bar and the lower is $[x]_q$, 
and the contribution of our choice of the tilings of 
height $b_i$ in column $j$ of $\overline{B}$ is $-[F_{b_i}]_q$. 
Thus the contribution of our choices in 
column $j$ to $S$ is $[F_{b_i}]_q+[x]_q -[F_{b_i}]_q = [x]_q$.
It follows that $S = [x]_q^n$.

On the other 
hand, suppose that we fix a Fibonacci rook placement  
$P \in \mathcal{NT}_{n-k}(B)$. 
Then we want to compute the $S_P = \sum_{Q \in \mathcal{M}_n(AugB_x), 
Q \cap B = P} \overline{\mathbf{W_{AugB_x,q}}}(P)$ which is the sum 
of $\overline{\mathbf{W_{AugB_x,q}}}(P)$ over all 
mixed placements $Q$ such that $Q$ intersect $B$ equals $P$. 
Our cancellation scheme ensures that the number of uncanceled cells in $B$ and $\overline{B}$ 
in the $k$ columns that do not contain tilings in $P$ is 
$b_1, \ldots, b_k$ as we read from right to left. 
For each such $1 \leq i \leq k$, the factor that 
arises from either choosing a rook to be placed 
in between the upper bar and lower bar or a flipped 
Fibonacci tiling of height $b_i$ in $\overline{B}$ is 
$[x]_q-[F_{b_i}]_q$. It follows that 
$$S_P =\overline{\mathbf{W_{B,q}}}(P) \prod_{i=1}^k [x]_q-[F_{b_i}]_q.$$
Hence it follows that 
\begin{eqnarray*}
S &=&  \sum_{k=0}^n \sum_{P \in \mathcal{NT}_{n-k}(B)} S_P \\
&=& \sum_{k=0}^n \left(\prod_{i=1}^k [x]_q-[F_{b_i}]_q\right)  
\sum_{P \in \mathcal{NT}_k(B)} \overline{\mathbf{W_{B,q}}}(P) \\
&=& \sum_{k=0}^n \overline{\mathbf{RT}}_{n-k}(B,q) \left( \prod_{i=1}^k [x]_q-[F_{b_i}]_q\right).
\end{eqnarray*}
\end{proof}

\begin{theorem}\label{thm:qFerrersrookprod} 
Let $B =F(b_1, \ldots, b_n)$ be a Ferrers 
board where $0 \leq b_1 \leq \cdots \leq b_n$ and $b_n > 0$ 
and $x \geq b_n$. Then 
\begin{equation}\label{qFibrookprod}
\ [x]_q^n =
\sum_{k=0}^n \mathbf{RT}_{n-k}(B,q) [x-F_{b_1}]_q [x-F_{b_2}]_q\cdots 
[x-F_{b_k}]_q.
\end{equation}
\end{theorem}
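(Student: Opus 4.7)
The plan is to derive Theorem~\ref{thm:qFerrersrookprod} as a short corollary of Theorem~\ref{thm:ovqFerrersrookprod} combined with the scaling relation (\ref{rel}), rather than attempting a fresh two-way count on the augmented board. The key algebraic tool is the elementary $q$-identity
\[
[x]_q - [F_{b_i}]_q \;=\; \frac{q^x - q^{F_{b_i}}}{q-1} \;=\; q^{F_{b_i}}\,[x - F_{b_i}]_q,
\]
which converts each factor on the right-hand side of (\ref{ovqFibrookprod}) into the shape appearing on the right-hand side of (\ref{qFibrookprod}).

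Concretely, I would begin with the identity
\[
[x]_q^n \;=\; \sum_{k=0}^n \overline{\mathbf{RT}}_{n-k}(B,q)\,\prod_{i=1}^k \bigl([x]_q - [F_{b_i}]_q\bigr)
\]
supplied by Theorem~\ref{thm:ovqFerrersrookprod}, substitute the above $q$-identity into each of the $k$ factors, and pull the resulting $q^{F_{b_i}}$'s out to produce a global prefactor $q^{\sum_{i=1}^k F_{b_i}}$. Then, observing that any placement in $\mathcal{NT}_{n-k}(B)$ has exactly $k$ empty columns with uncanceled heights $b_1,\dots,b_k$, I would invoke (\ref{rel}) with $k$ replaced by $n-k$ to recognise
\[
q^{\sum_{i=1}^k F_{b_i}}\,\overline{\mathbf{RT}}_{n-k}(B,q) \;=\; \mathbf{RT}_{n-k}(B,q),
\]
and substituting this delivers (\ref{qFibrookprod}).

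The main obstacle is why a direct column-by-column count of $\sum_{P\in\mathcal{M}_n(AugB_x)} \mathbf{W_{AugB_x,q}}(P)$, mirroring the proof of Theorem~\ref{thm:ovqFerrersrookprod}, does not succeed here: the three choices in a column with uncanceled height $b$ sum to $[F_b]_q + q^{F_b}[x]_q - [F_b]_q = q^{F_b}[x]_q$, which still depends on the running height $b$, so the column sums do not collapse to a clean power of $[x]_q$. The algebraic route through (\ref{rel}) bypasses precisely this obstruction, trading the column-by-column argument for a global repackaging of the $q^{F_{b_i}}$ factors between $\overline{\mathbf{RT}}_{n-k}(B,q)$ and $\mathbf{RT}_{n-k}(B,q)$. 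Under the hypothesis $x\geq b_n$ (and in any event as a formal identity in $q$), every $[x-F_{b_i}]_q$ appearing is well-defined, so the substitutions are legitimate.
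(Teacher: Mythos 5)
Your proposal is correct and takes essentially the same route as the paper's own proof: both deduce \eqref{qFibrookprod} from Theorem \ref{thm:ovqFerrersrookprod} by combining the relation $\mathbf{RT}_{n-k}(B,q) = q^{F_{b_1}+\cdots+F_{b_k}}\,\overline{\mathbf{RT}}_{n-k}(B,q)$ from \eqref{rel} with the identity $[x]_q-[F_{b_i}]_q = q^{F_{b_i}}[x-F_{b_i}]_q$. There is nothing to add.
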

\begin{proof}
It is easy to see from our cancellation scheme that 
$$\mathbf{RT}_{n-k}(B,q) =q^{F_{b_1}+ \cdots + F_{b_k}} 
\overline{\mathbf{RT}}_{n-k}(B,q).$$
Thus it follows from (\ref{ovqFibrookprod}) 
that 
$$
[x]_q^n =
\sum_{k=0}^n \mathbf{RT}_{n-k}(B,q)q^{-(F_{b_1}+ \cdots + F_{b_k})} 
([x]_q-[F_{b_1}]_q) ([x]_q-[F_{b_2}]_q)\cdots 
([x]_q-[F_{b_k}]_q).$$
However since $x \geq F_{b_i}$ for every $i$, 
$$[x]_q-[F_{b_i}]_q = q^{F_{b_i}}[x-F_{b_i}]_q$$
so that 
$$
[x]_q^n =
\sum_{k=0}^n \mathbf{RT}_{n-k}(B,q)  
[x-F_{b_1}]_q [x-F_{b_2}]_q\cdots 
[x-F_{b_k}]_q.$$
\end{proof}

Now consider the special case of the previous three  theorems 
when $B_n = F(0,1,2, \ldots, n-1)$. Then (\ref{qFibtilerec}) implies 
that 
$$\mathbf{RT}_{n+1-k}(B_{n+1},q) = q^{F_{k-1}}\mathbf{RT}_{n+1-k}(B_n,q) + 
[F_k]_q \mathbf{RT}_{n-k}(B_n,q).$$
Similarly (\ref{ovqFibtilerec}) implies 
that 
$$\overline{\mathbf{RT}}_{n+1-k}(B_{n+1},q) = 
\overline{\mathbf{RT}}_{n+1-k}(B_n,q) + 
[F_k]_q \overline{\mathbf{RT}}_{n-k}(B_n,q).$$

It then easily follows that for all $0 \leq k \leq n$, 
\begin{equation}\label{qSfnkpqinterp}
 \mathbf{SF}_{n,k}(q) = \mathbf{RT}_{n-k}(B_n,q) 
\end{equation}
and 
\begin{equation}\label{ovqSfnkpqinterp}
 \overline{\mathbf{SF}}_{n,k}(q) = \overline{\mathbf{RT}}_{n-k}(B_n,q).
\end{equation}

Note that $\mathbf{SF}_{n,0}(q) = \overline{\mathbf{SF}}_{n,0}(q) =0$ 
for all $n \geq 1$ since there are no Fibonacci rook placements in 
$\mathcal{NT}_n(B_n)$ since there are only $n-1$ non-zero columns. 
Moreover such a situation, we see that (\ref{qSfnkpqinterp}) 
implies that for $x \geq n$, 
\begin{equation*}
 [x]_q^n = 
\sum_{k=1}^n \mathbf{SF}_{n,k}(q) [x]_q [x-F_1]_q [x-F_2]_q\cdots 
[x-F_{k-1}]_q
\end{equation*}
Thus we have given a combinatorial proof of 
(\ref{SFdef}). 
Similarly, (\ref{ovqSfnkpqinterp}) 
implies that for $x \geq n$, 
\begin{equation*}
 [x]_q^n = 
\sum_{k=1}^n \overline{\mathbf{SF}}_{n,k}(q) [x]_q ([x]_q-[F_1]_q) 
([x]_q-[F_2]_q)\cdots 
([x]_q-[F_{k-1}]_q)
\end{equation*}
Thus we have given a combinatorial proof of 
(\ref{ovSFdef}).

\section{Identities for $\mathbf{SF}_{n,k}(q)$ and 
$\mathbf{cF}_{n,k}(q)$}

In this section, we shall derive various identities and special values 
for the Fibonacci analogues of the Stirling numbers $\mathbf{SF}_{n,k}(q)$, 
$\overline{\mathbf{SF}}_{n,k}(q)$, $\mathbf{cF}_{n,k}(q)$,  and 
$\overline{\mathbf{cF}}_{n,k}(q)$. 

Note that by (\ref{rel}), 
\begin{equation}\label{I1}
\mathbf{SF}_{n,k}(q) = q^{\sum_{i=1}^{k-1} F_i} \overline{\mathbf{SF}}_{n,k}(q).\end{equation}

Then we have the following theorem. 
\begin{theorem} \label{thm:id1}
\begin{enumerate}
\item $\overline{\mathbf{SF}}_{n,n}(q) =1$ and 
$\mathbf{SF}_{n,n}(q) =q^{\sum_{i=1}^{n-1} F_i}$.

\item $\overline{\mathbf{SF}}_{n,n-1}(q) =\sum_{i=1}^{n-1} [F_i]_q$ 
and 
$\mathbf{SF}_{n,n-1}(q) =q^{\sum_{i=1}^{n-2} F_i} \sum_{i=1}^{n-1} [F_i]_q$.
\item
$\overline{\mathbf{SF}}_{n,n-2}(q) =\sum_{i=1}^{n-2} [F_i]_q(\sum_{j=i}^{n-2} 
[F_j]_q)$
and 
$\mathbf{SF}_{n,n-2}(q) =q^{\sum_{i=1}^{n-3} F_i}  
\sum_{i=1}^{n-2} [F_i]_q(\sum_{j=i}^{n-2} 
[F_j]_q)$.

\item $\overline{\mathbf{SF}}_{n,1}(q) =1$ and 
$\mathbf{SF}_{n,1}(q) =1$.

\item $\overline{\mathbf{SF}}_{n,2}(q) =(n-1)$ and 
$\mathbf{SF}_{n,2}(q) =q(n-1)$.

\item $\overline{\mathbf{SF}}_{n,3}(q) =\frac{(1+q)^{n-1} -(q(n-1)+1)}{q^2}$ and 
$\mathbf{SF}_{n,3}(q) =(1+q)^{n-1} -(q(n-1)+1)$.
 
\end{enumerate}

\end{theorem}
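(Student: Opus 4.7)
The plan is first to reduce each assertion about $\mathbf{SF}_{n,k}(q)$ to one about $\overline{\mathbf{SF}}_{n,k}(q)$ using identity (\ref{I1}), namely $\mathbf{SF}_{n,k}(q) = q^{\sum_{i=1}^{k-1} F_i}\,\overline{\mathbf{SF}}_{n,k}(q)$. A quick check confirms that the exponents $\sum_{i=1}^{k-1} F_i$ match the prefactors in the six claimed formulas, so it suffices to verify the six identities for $\overline{\mathbf{SF}}_{n,k}(q)$.

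For items (1)--(5) I would work directly with the rook-theoretic interpretation $\overline{\mathbf{SF}}_{n,k}(q) = \overline{\mathbf{RT}}_{n-k}(B_n,q)$ from (\ref{ovqSfnkpqinterp}), where $B_n = F(0, 1, \ldots, n-1)$. The central observation is that in any placement $P \in \mathcal{NT}_{n-k}(B_n)$ with tilings in columns $i_1 < i_2 < \cdots < i_{n-k}$, the cancellation scheme of Section 3 forces the $s$-th tiling to have height $b_{i_s-(s-1)} = i_s - s$, so summing over all tilings of that column yields a factor $[F_{i_s - s}]_q$. Item (1) is the empty placement, of weight $1$. In item (4) one must place tilings in every one of columns $2, \ldots, n$ (since column $1$ has height $0$), all of height $1$ with $[F_1]_q = 1$. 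For items (2) and (3) one picks one or two non-empty columns $j_1 < j_2$ in $\{2, \ldots, n\}$; the heights become $j_1 - 1$ and $j_2 - 2$, and the change of variables $i = j_1 - 1$, $j = j_2 - 2$ converts $j_1 < j_2$ into $i \leq j$ and delivers the single and double sums claimed. For item (5), the single non-forced empty column $j \in \{2, \ldots, n\}$ splits the tilings into $j-2$ of height $1$ and $n-j$ of height $2$; since $[F_1]_q = [F_2]_q = 1$, every placement has weight $1$ and summing over $j$ gives $n-1$.

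For item (6) I would induct on $n$ using the recursion (\ref{ovSFrec}) with $k = 3$. Substituting $[F_3]_q = 1+q$ and the value $\overline{\mathbf{SF}}_{n,2}(q) = n-1$ already established in item (5) gives
\[\overline{\mathbf{SF}}_{n+1,3}(q) = (n-1) + (1+q)\,\overline{\mathbf{SF}}_{n,3}(q).\]
The base case $n = 3$ yields $\overline{\mathbf{SF}}_{3,3}(q) = 1 = \tfrac{(1+q)^2 - (2q+1)}{q^2}$, and the inductive step reduces to the algebraic identity $q^2(n-1) - (1+q)(q(n-1) + 1) = -(qn+1)$, which is a one-line expansion.

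The main obstacle is the careful bookkeeping of tile heights under the cancellation scheme in items (2), (3) and (5); once the formula ``the $s$-th tiling has height $i_s - s$ on $B_n$'' is verified, the remaining arguments are routine summations together with the short induction for item (6).
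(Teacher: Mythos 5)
Your proposal is correct and follows essentially the same route as the paper: reduce the $\mathbf{SF}_{n,k}(q)$ claims to $\overline{\mathbf{SF}}_{n,k}(q)$ via (\ref{I1}), enumerate the Fibonacci rook placements on $B_n$ directly for items (1)--(5) using the fact that the cancellation scheme forces the $s$-th tiling to have height $i_s-s$, and prove item (6) by induction from the recursion with $[F_3]_q=1+q$. The only cosmetic difference is that you run the induction in (6) on $\overline{\mathbf{SF}}_{n,3}(q)$ while the paper runs it on $\mathbf{SF}_{n,3}(q)$ and converts at the end; both verifications check out.
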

\begin{proof}
For (1), it is easy to see that $\overline{\mathbf{SF}}_{n,n}(q) =1$ since 
the only placement in $\mathcal{FT}_{n-n}(B_n)$ is the empty placement. 
The fact that $\mathbf{SF}_{n,k}(q) =q^{\sum_{i=1}^{n-1} F_i}$ then 
follows from (\ref{I1}).

For (2), we can see that $\overline{\mathbf{SF}}_{n,n-1}(q) =
\sum_{i=1}^{n-1}[F_i]_q$ because placements in 
$\mathcal{FT}_{n-(n-1)}(B_n) = \mathcal{FT}_{1}(B_n)$ 
have exactly one column which 
is filled with a Fibonacci tiling.  If that column is 
column $i+1$, then $i \geq 1$ and 
the sum of the weights of the possible tilings 
in column $i$ is $[F_{i}]_q$.  The fact that 
$\mathbf{SF}_{n,n-1}(q) =q^{\sum_{i=1}^{n-2} F_i} \sum_{i=1}^{n-1} [F_i]_q$ then follows from (\ref{I1}).

For (3), we can classify the placements in 
$\mathcal{FT}_{n-(n-2)}(B_n) = \mathcal{FT}_{2}(B_n)$ by the left-most 
column which contains a tiling.  If that column is 
column $i+1$, then $i \geq 1$ and 
the sum of the weights of the possible tilings 
in column $i$ is $[F_i]_q$.  Moreover, any tiling 
in column $i$ cancels one cell in the remaining columns so 
that number of uncanceled cells in the columns to the right 
of column $i+1$ will be $i, \ldots, n-2$, reading from right to left. 
It then follows that 
$$\overline{\mathbf{SF}}_{n,n-2}(q) =\sum_{i=1}^{n-2} [F_i]_q(\sum_{j=i}^{n-2} 
[F_j]_q).$$ 
The fact that 
$$\mathbf{SF}_{n,n-1}(q) =q^{\sum_{i=1}^{n-3} F_i} \sum_{i=1}^{n-2} [F_i]_q(\sum_{j=i}^{n-2} 
[F_j]_q)$$ 
then follows from (\ref{I1}).

For (4), note that the elements in $\mathcal{FT}_{n-1}(B_n)$ have 
a tiling in every column. Given our cancellation scheme, there is 
exactly one such configuration. For example, the unique 
element of $\mathcal{FT}_{5}(B_6)$ is pictured in Figure 
\ref{fig:fullrooks} where we have placed $i$s in the cells 
canceled by the tiling in column $i$. Thus the unique 
element of $\mathcal{FT}_{n-1}(B_n)$ is just the Fibonacci 
rook placement where there is tiling of height one in each column. Thus 
$\overline{\mathbf{SF}}_{n,1}(q) =\mathbf{SF}_{n,1}(q) =1$ since 
the rank of each tiling height 1 is 0. 

\fig{fullrooks}{The Fibonacci rook tiling in $\mathcal{FT}_{5}(B_6)$.}

For (5), note that the elements in $\mathcal{FT}_{n-2}(B_n)$ 
have exactly one column $i \geq 2$ which does not 
have a tiling.  Given our cancellation scheme, if the column 
with out a tiling is column $i \geq 2$, then any non-empty 
column to the left of column $i$ will be filled with a tiling 
of height $1$ and every column to the right of column $i$ will 
be filled with a tiling of height 2.  For example, the unique 
element of $\mathcal{FT}_{6}(B_8)$ is pictured in Figure 
\ref{fig:2fullrooks} where we have placed $i$s in the cells 
canceled by the tiling in column $i$. Since the ranks of 
the tilings of heights 1 and 2 are 0, it follows 
that $\overline{\mathbf{SF}}_{n,2}(q) =n-1$.  
The fact that 
$\mathbf{SF}_{n,n-1}(q) =q(n-1)$  
then follows from (\ref{I1}). 

\fig{2fullrooks}{A Fibonacci rook tiling in $\mathcal{FT}_{6}(B_8)$.}

For (6), we proceed by induction. Note that we have 
proved 
$$\mathbf{SF}_{3,3}(q) =q^{F_1+F_2}=q^2 = (1+q)^2-(2q +1).$$
Now assume that $n \geq 3$ and 
$\mathbf{SF}_{n,3}(q)=(1+q)^{n-1}-((n-1)q+1)$. Then 
\begin{eqnarray*}
\mathbf{SF}_{n+1,3}(q) &=&q^{F_2}\mathbf{SF}_{n,2}(q)+[F_3]_q
\mathbf{SF}_{n,3}(q)\\
&=& q\left(q(n-1) \right)+(1+q)\left((1+q)^{n-1}-((n-1)q+1)\right) \\
&=& q^2(n-1) +(1-q)^n -(n-1)q-(n-1)q^2 -q -1\\
&=& (1-q)^n -(nq+1).
\end{eqnarray*}
The fact that $\overline{\mathbf{SF}}_{n,3}(q)=
\frac{(1+q)^{n-1}-((n-1)q+1)}{q^2}$ then follows from (\ref{I1}). 
\end{proof}

Next we define  
$$\overline{\mathbb{SF}}_k(q,t) := 
\sum_{n \geq k} \overline{\mathbf{SF}}_{n,k}(q) t^n$$ 
for $k \geq 1$
It follows from Theorem \ref{thm:id1} that 
\begin{equation}\label{eq:id2}
\overline{\mathbb{SF}}_1(q,t) = 
\sum_{n \geq 1} \overline{\mathbf{SF}}_{n,1}(q) t^n = 
\sum_{n \geq 1} t^n = \frac{t}{1-t}.
\end{equation}
Then for $k > 1$,  
\begin{eqnarray*} 
\overline{\mathbb{SF}}_k(q,t) &=& \sum_{n \geq k} 
\overline{\mathbf{SF}}_{n,k}(q) t^n \\
&=& t^k + \sum_{n > k} \overline{\mathbf{SF}}_{n,k}(q) t^n \\
&=& t^k + t \sum_{n > k} \left( \overline{\mathbf{SF}}_{n-1,k-1}(q) +
[F_k]_q \overline{\mathbf{SF}}_{n-1,k-}(p,q)\right) t^{n-1} \\
&=& t^k + t\left(\sum_{n > k}  \overline{\mathbf{SF}}_{n-1,k-1}(q) t^{n-1}\right) 
+ [F_k]_qt\left(\sum_{n > k}  \overline{\mathbf{SF}}_{n-1,k}(q) t^{n-1}\right) \\
&=& t^k + t(\overline{\mathbb{SF}}_{k-1}(q,t) -t^{k-1}) +  [F_k]_qt 
\overline{\mathbb{SF}}_k(q,t).
\end{eqnarray*}
It follows 
that 
\begin{equation}\label{eq:id3}
\overline{\mathbb{SF}}_k(q,t) = \frac{t}{(1 - [F_k]_qt)}
\overline{\mathbb{SF}}_{k-1}(q,t).
\end{equation}
The following theorem easily follows from (\ref{eq:id2}) and 
(\ref{eq:id3}).
\begin{theorem}\label{thm:id4} For all $k \geq 1$, 
$$\overline{\mathbb{SF}}_k(q,t)= \frac{t^k}{(1-[F_1]_qt) (1-[F_2]_qt)\cdots 
(1-[F_k]qt)}.$$
\end{theorem}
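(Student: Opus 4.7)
The plan is to prove the formula by straightforward induction on $k$, leaning entirely on the two identities \eqref{eq:id2} and \eqref{eq:id3} that have already been derived in the excerpt. All the combinatorial content is packaged in those two equations, so what remains is essentially a telescoping argument.

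For the base case $k=1$, I would use \eqref{eq:id2}, which says $\overline{\mathbb{SF}}_1(q,t) = \frac{t}{1-t}$. Since $F_1 = 1$ and therefore $[F_1]_q = [1]_q = 1$, this agrees exactly with the claimed closed form $\frac{t^1}{1-[F_1]_q t}$, matching the $k=1$ instance of the theorem.

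For the inductive step, assume the formula holds for $k-1$, i.e.
$$\overline{\mathbb{SF}}_{k-1}(q,t) = \frac{t^{k-1}}{(1-[F_1]_qt)(1-[F_2]_qt)\cdots(1-[F_{k-1}]_qt)}.$$
Then I would invoke \eqref{eq:id3}, which states $\overline{\mathbb{SF}}_k(q,t) = \frac{t}{1-[F_k]_q t}\,\overline{\mathbb{SF}}_{k-1}(q,t)$, and substitute the inductive hypothesis. Multiplying the extra factor $\frac{t}{1-[F_k]_q t}$ into the product form immediately yields the desired expression, completing the induction.

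There is no real obstacle in this argument; the substantive work was already carried out in deriving the recursion \eqref{eq:id3} from the two-term recursion \eqref{ovSFrec}. The step here is purely formal manipulation of generating functions, so the only thing to be careful about is the base case verification that $[F_1]_q = 1$, which is the small arithmetic check that links \eqref{eq:id2} to the $k=1$ case of the product formula.
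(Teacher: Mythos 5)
Your proof is correct and follows exactly the route the paper intends: the paper simply asserts that the theorem "easily follows from (\ref{eq:id2}) and (\ref{eq:id3})," and your induction on $k$ with the base case $[F_1]_q=1$ and the telescoping via (\ref{eq:id3}) is precisely that argument spelled out.
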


Note that it follows from (\ref{I1}) and  Theorem \ref{thm:id4} that 
$$\mathbb{SF}_k(q,t)= \sum_{n \geq k}\mathbf{SF}_{n,k}(q) t^n = 
\frac{q^{\sum_{i=1}^{k-1}F_i} t^k}{(1-[F_1]_qt) (1-[F_2]_qt)\cdots 
(1-[F_k]qt)}.$$

For any formal power series in $f(x) = \sum_{n \geq 0}f_n x^n$, 
we let $f(x)|_{x^n} = f_n$ denote the coefficient of $x^n$ in 
$f(x)$. Our next result will give formulas for 
$\overline{\mathbf{SF}}_{n,k}(q)|_{q^s}$ for $s =0,1,2$. 

\begin{theorem}\label{thm:id5}
\begin{enumerate} 
\item For all $n \geq k \geq 1$, $\displaystyle 
\overline{\mathbf{SF}}_{n,k}(q)|_{q^0} = \binom{n-1}{k-1}$. 

\item For all $n > k \geq 2$, $\displaystyle 
\overline{\mathbf{SF}}_{n,k}(q)|_{q} = (k-2)\binom{n-1}{k}$.

\item  For all 
$n \geq s$, $\displaystyle 
\overline{\mathbf{SF}}_{n,3}(q)|_{q^s}=  \binom{n-1}{s+2}$.

\item  For all $n \geq k \geq 3$,  $\displaystyle 
\overline{\mathbf{SF}}_{n,k}(q)|_{q^2} = (k-3)\binom{n-1}{k} 
+ \binom{k-1}{2} \binom{n-1}{k+1}$. 

\item for all $n \geq k \geq 4$,
\begin{equation*}
\overline{\mathbf{SF}}_{n,k}(q)|_{q^3} = (k-4)\binom{n-1}{k} 
+ \left(\binom{k-1}{2} +\binom{k-2}{2} -1\right)\binom{n-1}{k+1} 
+\binom{k}{3} \binom{n-1}{k+2}.
\end{equation*}

\item For all 
$n \geq k \geq 4$,
\begin{eqnarray*}
\overline{\mathbf{SF}}_{n,k}(q)|_{q^4} &=& (k-4)\binom{n-1}{k} 
+ \left(\binom{k-1}{2} +\binom{k-2}{2} +\binom{k-3}{2}-3\right)\binom{n-1}{k+1} +\\
&& \left(2\binom{k}{3}+\binom{k-1}{3} -k +1\right) \binom{n-1}{k+2} + \binom{k+1}{4} \binom{n-1}{k+3}.
\end{eqnarray*}

\end{enumerate}
\end{theorem}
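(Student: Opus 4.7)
The plan: Derive a single-sum representation from Theorem \ref{thm:id4} and extract $q^s$-coefficients. Taking $[t^n]$ on both sides of
\begin{equation*}
\overline{\mathbb{SF}}_k(q,t) = \frac{t^k}{\prod_{i=1}^k(1-[F_i]_q t)}
\end{equation*}
and expanding each geometric factor gives
\begin{equation*}
\overline{\mathbf{SF}}_{n,k}(q) \;=\; \sum_{j_1+\cdots+j_k=n-k}\ \prod_{i=1}^{k}[F_i]_q^{j_i}.
\end{equation*}
Since $[F_1]_q=[F_2]_q=1$, only indices $i\geq 3$ contribute to positive powers of $q$. Writing $[F_i]_q^{j_i}=(1-q^{F_i})^{j_i}(1-q)^{-j_i}$ gives
\begin{equation*}
[F_i]_q^{j_i}|_{q^s} \;=\; \sum_{\ell\ge 0,\,F_i\ell\le s}(-1)^\ell\binom{j_i}{\ell}\binom{j_i+s-F_i\ell-1}{s-F_i\ell};
\end{equation*}
when $F_i>s$ only $\ell=0$ survives, giving the generic value $\binom{j_i+s-1}{s}$, while corrections arise only from indices $i$ with $F_i\le s$.

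The central summation tool is
\begin{equation*}
\sum_{j_1+\cdots+j_k=n-k}\binom{j_{i_1}}{m_1}\cdots\binom{j_{i_r}}{m_r}\;=\;\binom{n-1}{k+M-1}
\end{equation*}
for distinct $i_1,\ldots,i_r\in\{1,\ldots,k\}$ with $M=m_1+\cdots+m_r$; this follows from the one-line check $[t^{n-k}]\frac{t^M}{(1-t)^{k+M}}=\binom{n-1}{k+M-1}$, which remarkably depends only on the total $M$. Consequently, every coefficient $\overline{\mathbf{SF}}_{n,k}(q)|_{q^s}$ is a $\mathbb{Z}$-linear combination of $\binom{n-1}{k-1},\binom{n-1}{k},\ldots,\binom{n-1}{k+s-1}$.

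Parts (1) and (2) follow immediately: for (1), $[F_i]_q^{j_i}|_{q^0}=1$, so the sum counts compositions; for (2), only a single factor with $i\ge 3$ can contribute $q^1$, with weight $j_i$, giving $(k-2)\binom{n-1}{k}$. Part (3) is fastest from the closed form in Theorem \ref{thm:id1}(6): expanding $(1+q)^{n-1}$ via the binomial theorem immediately yields $\overline{\mathbf{SF}}_{n,3}(q)|_{q^s}=\binom{n-1}{s+2}$. For Part (4), the exponent $s=2$ decomposes as either one factor contributing $q^2$ (giving $\binom{j_i}{2}$ at $i=3$ and $\binom{j_i+1}{2}$ at $i\ge 4$) or two factors contributing $q^1$ each (giving $j_i j_{i'}$ for distinct $3\le i<i'\le k$); summing via the central identity and using $(k-2)+\binom{k-2}{2}=\binom{k-1}{2}$ yields the claim.

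Parts (5) and (6) use the same template with more decompositions and more boundary corrections. The only boundary indices are $i=3$ (where $F_3=2$) and $i=4$ (where $F_4=3$), since $F_5=5>4$. I will enumerate every way to distribute a total $q$-exponent of $s$ among the factors $i\ge 3$, apply the corrected formula above whenever $F_i\le s_i$, and reduce each sub-sum to some $\binom{n-1}{k+M-1}$ via the central identity. The main obstacle will be the combinatorial bookkeeping: checking that the boundary corrections at $i=3,4$ yield exactly the stated constant offsets (the ``$-1$'' in Part (5); the ``$-3$'' and ``$-k+1$'' in Part (6)) and that the polynomial-in-$k$ coefficients of each $\binom{n-1}{k+j}$ collapse into the claimed binomial expressions such as $\binom{k-1}{2}+\binom{k-2}{2}-1$ and $\binom{k+1}{4}$. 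Once the enumeration is laid out cleanly, the simplifications are elementary binomial identities.
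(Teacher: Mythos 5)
Your approach is correct but takes a genuinely different route from the paper's. The paper proves each part by a double induction: it extracts the coefficient of $q^s$ from the recursion $\overline{\mathbf{SF}}_{n+1,k}(q)=\overline{\mathbf{SF}}_{n,k-1}(q)+[F_k]_q\,\overline{\mathbf{SF}}_{n,k}(q)$, so that $\overline{\mathbf{SF}}_{n+1,k}(q)|_{q^s}-\overline{\mathbf{SF}}_{n,k}(q)|_{q^s}$ is expressed through the formulas for $k-1$ and for all smaller exponents, and then iterates in $n$ inside an induction on $k$; each new $s$ therefore needs all previous cases as input, and one must guess the answer before verifying it. You instead read off the closed form $\overline{\mathbf{SF}}_{n,k}(q)=\sum_{j_1+\cdots+j_k=n-k}\prod_{i=1}^k[F_i]_q^{j_i}$ from Theorem \ref{thm:id4} and extract coefficients directly via your identity $\sum_{j_1+\cdots+j_k=n-k}\prod_{r}\binom{j_{i_r}}{m_r}=\binom{n-1}{k+M-1}$, which I agree is correct. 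Your parts (1)--(4) are complete (the paper's own proof of (1) is instead a direct combinatorial count of placements using only height-one tiles, and both arguments get (3) from Theorem \ref{thm:id1}(6)). Parts (5) and (6) are only outlined, but the outline is genuinely mechanical since the only non-generic indices are $i=3,4$; I checked that carrying it out reproduces the stated formulas, e.g.\ in (5) the coefficient of $\binom{n-1}{k+1}$ comes out as $(2k-6)+(k-3)^2=(k-1)(k-3)=\binom{k-1}{2}+\binom{k-2}{2}-1$ and that of $\binom{n-1}{k+2}$ as $(k-2)^2+\binom{k-2}{3}=\binom{k}{3}$. Your route buys a uniform, non-inductive derivation for any fixed $s$ and makes transparent why the answer is a $\mathbb{Z}$-linear combination of $\binom{n-1}{k-1},\ldots,\binom{n-1}{k+s-1}$; the paper's route requires less algebra per step but compounds the case analysis and, as the garbled final display of its part (6) shows, is more error-prone in the bookkeeping.
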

\begin{proof} 

For (1), note that a placement $P$ in $\mathcal{FT}_{n-k}(B_n)$ must 
have $k-1$ empty columns among columns $2, \ldots, n$.  If 
$\overline{WF}(P) =1$, then it must be the case that all the tilings  
in the columns which contain tilings in $P$ must have rank 0 so that 
the tiling must contain only tiles of height 1.  Thus $P$ is 
completely determined by the choice of the $k-1$ empty columns among 
columns $2, \ldots, n$.  Thus 
$\overline{\mathbf{SF}}_{n,k}(p,q)|_{q^0} = \binom{n-1}{k-1}$. 

For (3), note that by part 6 of Theorem \ref{thm:id1}, 
we have that for any $s \geq 0$, 
\begin{eqnarray*}
\overline{\mathbf{SF}}_{n,3}(q)|_{q^s} &=& \mathbf{SF}_{n,3}(q)|_{q^{s+2}} = (1+q)^{n-1}-((n-1)q+1)|_{q^{s+2}}\\
&=& \binom{n-1}{s+2}.
\end{eqnarray*}

For (2), note that 
$\overline{\mathbf{SF}}_{n,2}(q)|_{q} = 0$ since 
$\overline{\mathbf{SF}}_{n,k}(q) = (n-1)$ by part 5 of Theorem \ref{thm:id1}.
By (3), $\overline{\mathbf{SF}}_{n,3}(q)|_{q}= \binom{n-1}{3}$. 
Thus our formula holds for $n =2$ and $n=3$.

Next fix $k \geq 4$ and assume by induction that 
$\overline{\mathbf{SF}}_{n,k-1}(q)|_{q}= (k-3) \binom{n-1}{k-1}$ 
for all $n \geq k-1$.  Then we shall prove by induction on 
$n$ that $\overline{\mathbf{SF}}_{n,k}(q)|_{q}= (k-2) \binom{n-1}{k}$.
The base case $n =k$ holds since 
$\overline{\mathbf{SF}}_{k,}(q)=1$. But then assuming that 
$\overline{\mathbf{SF}}_{n,k}(q)|_{q}= (k-2) \binom{n-1}{k}$, 
we see that 
\begin{eqnarray*}
\overline{\mathbf{SF}}_{n+1,k}(q)|_{q} &=& 
\overline{\mathbf{SF}}_{n,k-1}(q)|_{q}+ 
((1+q+q^2+ \cdots + q^{F_k-1})\overline{\mathbf{SF}}_{n,k}(q))|_{q} \\
&=& (k-3) \binom{n-1}{k-1} + \overline{\mathbf{SF}}_{n,k}(q))|_{q^0} +
\overline{\mathbf{SF}}_{n,k}(q))|_{q}\\
&=& (k-3) \binom{n-1}{k-1} + \binom{n-1}{k-1} +(k-2) \binom{n-1}{k} \\
&=& (k-2) \binom{n}{k}.
\end{eqnarray*}

Parts (4), (5), and (6) can easily be proved by induction.  

For example, by (3),
$$ \overline{\mathbf{SF}}_{n,3}(q)|_{q^2} = \binom{n-1}{4}$$ 
so that our formula holds for $k=3$.  Now suppose 
that $k \geq 4$ and our formula holds for $k-1$. That is, 
$$ \overline{\mathbf{SF}}_{n,k-1}(q)|_{q^2} = (k-4) 
\binom{n-1}{k-1} + \binom{k-2}{2}\binom{n-1}{k}.$$
Next observe that $\overline{\mathbf{SF}}_{k,k}(q)|_{q^2} =0$ since 
 $\overline{\mathbf{SF}}_{k,k}(q) =1$ so that our formula 
holds for $n =k$.  Note also that for 
$k \geq 4$, $F_k \geq 3$. 
But then for $n \geq k \geq 4$,  
\begin{eqnarray*}
\overline{\mathbf{SF}}_{n+1,k}(q)|_{q^2} &=& 
\overline{\mathbf{SF}}_{n,k-1}(q)|_{q^2}+\left([F_k]_q 
\overline{\mathbf{SF}}_{n,k}(q)\right)|_{q^2} \\
&=& \overline{\mathbf{SF}}_{n,k-1}(q)|_{q^2}+\left((1+q+q^2)
\overline{\mathbf{SF}}_{n,k}(q)\right)|_{q^2} \\
&=& \overline{\mathbf{SF}}_{n,k-1}(q)|_{q^2}+
\overline{\mathbf{SF}}_{n,k}(q)|_{q^0}+
\overline{\mathbf{SF}}_{n,k}(q)|_{q}+\overline{\mathbf{SF}}_{n,k}(q)|_{q^2}\\
&=& (k-4) \binom{n-1}{k-1} + \binom{k-2}{2}\binom{n-1}{k} 
+ \binom{n-1}{k-1} + (k-2)\binom{n-1}{k}
\\
&&+ \overline{\mathbf{SF}}_{n,k}(q)|_{q^2} \\
&=& (k-3) \binom{n-1}{k-1} + \binom{k-1}{2}\binom{n-1}{k} + \overline{\mathbf{SF}}_{n,k}(q)|_{q^2}.
\end{eqnarray*}
This gives us a recursion for $\overline{\mathbf{SF}}_{n+1,k}(q)|_{q^2}$ in terms of $\overline{\mathbf{SF}}_{n,k}(q)|_{q^2}$ which we can iterate 
to prove that 
$$\overline{\mathbf{SF}}_{n,k}(q)|_{q^2} = 
(k-3) \binom{n-1}{k} + \binom{k-1}{2}\binom{n-1}{k+1}.$$

For (5), we first have to establish the base case $k=4$. 

\begin{eqnarray*}
\overline{\mathbf{SF}}_{n+1,4}(q)|_{q^3} &=& 
\overline{\mathbf{SF}}_{n,3}(q)|_{q^3}+\left([F_4]_q 
\overline{\mathbf{SF}}_{n,4}(q)\right)|_{q^3} \\
&=& \overline{\mathbf{SF}}_{n,3}(q)|_{q^3}+\left((1+q+q^2)
\overline{\mathbf{SF}}_{n,4}(q)\right)|_{q^3} \\
&=& \overline{\mathbf{SF}}_{n,k-1}(q)|_{q^3}+
\overline{\mathbf{SF}}_{n,k}(q)|_{q}+
\overline{\mathbf{SF}}_{n,k}(q)|_{q^2}+\overline{\mathbf{SF}}_{n,k}(q)|_{q^3}\\
&=& \binom{n-1}{5} + 2\binom{n-1}{4} +
\left(\binom{n-1}{4}+3\binom{n-1}{5}\right)
+\overline{\mathbf{SF}}_{n,k}(q)|_{q^3} \\
&=& 3\binom{n-1}{4} + 4\binom{n-1}{5} +\overline{\mathbf{SF}}_{n,4}(q)|_{q^3}.
\end{eqnarray*} 
This gives us a recursion for $\overline{\mathbf{SF}}_{n+1,4}(q)|_{q^3}$ in terms of $\overline{\mathbf{SF}}_{n,4}(q)|_{q^3}$ which we can iterate 
to prove that 
$$\overline{\mathbf{SF}}_{n,4}(q)|_{q^3} = 
3\binom{n-1}{5} + 4\binom{n-1}{6}.$$
Thus our formula for (5) holds for $k =4$.

Next assume that $k \geq 5$. First we note that
$ \overline{\mathbf{SF}}_{k,k}(q)|_{q^3} =0$ since 
 $\overline{\mathbf{SF}}_{k,k}(q) =1$ so that our formula 
holds for $n =k$.  Note also that for 
$k \geq 5$, $F_k \geq 5$. 
Now suppose our formula holds for $k-1$. That is, 
\begin{multline*}  
\overline{\mathbf{SF}}_{n,k-1}(q)|_{q^3} = \\
(k-5) 
\binom{n-1}{k-1} + \left(\binom{k-2}{2}+\binom{k-3}{2}-1\right)
\binom{n-1}{k} +\binom{k-1}{3}\binom{n-1}{k+1}.
\end{multline*}
Next observe that $\overline{\mathbf{SF}}_{k,k}(q)|_{q^3} =0$ since 
 $\overline{\mathbf{SF}}_{k,k}(q) =1$ so that our formula 
holds for $n =k$.  Note also that for 
$k \geq 4$, $F_k \geq 3$. 
But then for $n \geq k \geq 5$,  
\begin{eqnarray*}
\overline{\mathbf{SF}}_{n+1,k}(q)|_{q^3} &=& 
\overline{\mathbf{SF}}_{n,k-1}(q)|_{q^3}+\left([F_k]_q 
\overline{\mathbf{SF}}_{n,k}(q)\right)|_{q^3} \\
&=& \overline{\mathbf{SF}}_{n,k-1}(q)|_{q^2}+\left((1+q+q^2+q^3)
\overline{\mathbf{SF}}_{n,k}(q)\right)|_{q^3} \\
&=& \overline{\mathbf{SF}}_{n,k-1}(q)|_{q^3}+
\overline{\mathbf{SF}}_{n,k}(q)|_{q^0}+
\overline{\mathbf{SF}}_{n,k}(q)|_{q}+
\overline{\mathbf{SF}}_{n,k}(q)|_{q^2}+ 
\overline{\mathbf{SF}}_{n,k}(q)|_{q^3}\\
&=& (k-5) 
\binom{n-1}{k-1} + \left(\binom{k-2}{2}+\binom{k-3}{2}-1\right)
\binom{n-1}{k} +\binom{k-1}{3}\binom{n-1}{k+1}\\
&& + \binom{n-1}{k-1}+(k-2)\binom{n-1}{k}+ 
(k-3)\binom{n-1}{k} + \binom{k-1}{2}\binom{n-1}{k+1} \\
&&+ \overline{\mathbf{SF}}_{n,k}(q)|_{q^3}\\
&=& (k-4) \binom{n-1}{k-1} + \left(\binom{k-1}{2}+\binom{k-2}{2}-1\right)
\binom{n-1}{k} + \binom{k}{3}\binom{n-1}{k+1}
\\
&&+ \overline{\mathbf{SF}}_{n,k}(q)|_{q^3}. \\
\end{eqnarray*}
This gives us a recursion for $\overline{\mathbf{SF}}_{n+1,k}(q)|_{q^3}$ in terms of $\overline{\mathbf{SF}}_{n,k}(q)|_{q^3}$ which we can interate 
to prove that 
$$\overline{\mathbf{SF}}_{n,k}(q)|_{q^3} = 
(k-4) \binom{n-1}{k} + \left(\binom{k-1}{2}+\binom{k-2}{2}-1\right)
\binom{n-1}{k+1} + \binom{k}{3}\binom{n-1}{k+2}.$$

For (6), again, we first have to establish the base case $k=4$. 

\begin{eqnarray*}
\overline{\mathbf{SF}}_{n+1,4}(q)|_{q^4} &=& 
\overline{\mathbf{SF}}_{n,3}(q)|_{q^4}+\left([F_4]_q 
\overline{\mathbf{SF}}_{n,4}(q)\right)|_{q^4} \\
&=& \overline{\mathbf{SF}}_{n,3}(q)|_{q^4}+\left((1+q+q^2)
\overline{\mathbf{SF}}_{n,4}(q)\right)|_{q^4} \\
&=& \overline{\mathbf{SF}}_{n,k-1}(q)|_{q^4}+
\overline{\mathbf{SF}}_{n,k}(q)|_{q^2}+
\overline{\mathbf{SF}}_{n,k}(q)|_{q^3}+\overline{\mathbf{SF}}_{n,k}(q)|_{q^4}\\
&=& \binom{n-1}{6} + \binom{n-1}{4} +3\binom{n-1}{5} + 
3\binom{n-1}{5}+4\binom{n-1}{6} + \\
&&\overline{\mathbf{SF}}_{n,k}(q)|_{q^4} \\
&=& \binom{n-1}{4} + 6\binom{n-1}{5} + 5\binom{n-1}{6}+\overline{\mathbf{SF}}_{n,4}(q)|_{q^3}.
\end{eqnarray*} 
This gives us a recursion for $\overline{\mathbf{SF}}_{n+1,4}(q)|_{q^4}$ in terms of $\overline{\mathbf{SF}}_{n,4}(q)|_{q^4}$ which we can iterate 
to prove that 
$$\overline{\mathbf{SF}}_{n,4}(q)|_{q^4} =  \binom{n-1}{5} + 6\binom{n-1}{6} + 5\binom{n-1}{7}.$$
Thus our formula for (6) holds for $k =4$.

Next assume that $k \geq 5$. First we note that
$ \overline{\mathbf{SF}}_{k,k}(q)|_{q^4} =0$ since 
 $\overline{\mathbf{SF}}_{k,k}(q) =1$ so that our formula 
holds for $n =k$.  Note also that for 
$k \geq 5$, $F_k \geq 5$. 
Now suppose our formula holds for $k-1$. That is, 
\begin{eqnarray*}
\overline{\mathbf{SF}}_{n,k-1}(q)|_{q^4} &=& (k-5) 
\binom{n-1}{k-1} + \left(\binom{k-2}{2}+\binom{k-3}{2}+\binom{k-3}{2}-3\right)
\binom{n-1}{k} +\\
&&\left(\binom{k-1}{3}+\binom{k-2}{3}-(k-1)+1\right)\binom{n-1}{k+1} + \binom{k}{4}\binom{n-1}{k+2}.
\end{eqnarray*}
Next observe that $\overline{\mathbf{SF}}_{k,k}(q)|_{q^5} =0$ since 
 $\overline{\mathbf{SF}}_{k,k}(q) =1$ so that our formula 
holds for $n =k$.  Note also that for 
$k \geq 5$, $F_k \geq 5$. 
But then for $n \geq k \geq 5$,  
\begin{eqnarray*}
\overline{\mathbf{SF}}_{n+1,k}(q)|_{q^4} &=& 
\overline{\mathbf{SF}}_{n,k-1}(q)|_{q^4}+\left([F_k]_q 
\overline{\mathbf{SF}}_{n,k}(q)\right)|_{q^4} \\
&=& \overline{\mathbf{SF}}_{n,k-1}(q)|_{q^4}+\left((1+q+q^2+q^3+q^4)
\overline{\mathbf{SF}}_{n,k}(q)\right)|_{q^4} \\
&=& \overline{\mathbf{SF}}_{n,k-1}(q)|_{q^3}+
\overline{\mathbf{SF}}_{n,k}(q)|_{q^0}+
\overline{\mathbf{SF}}_{n,k}(q)|_{q}+
\overline{\mathbf{SF}}_{n,k}(q)|_{q^2}+ 
\overline{\mathbf{SF}}_{n,k}(q)|_{q^3}\\ 
&& + \overline{\mathbf{SF}}_{n,k}(q)|_{q^4}\\
&=& (k-5) 
\binom{n-1}{k-1} + \left(\binom{k-2}{2}+\binom{k-3}{2}+\binom{k-3}{2}-3\right)
\binom{n-1}{k} +\\
&&+ \left(\binom{k-1}{3}+\binom{k-2}{3}-(k-1)+1\right)\binom{n-1}{k+1} + 
\binom{k}{4}\binom{n-1}{k+2}\\
&&+ \binom{n-1}{k-1}+(k-2)\binom{n-1}{k}+ 
(k-3)\binom{n-1}{k} + \binom{k-1}{2} \binom{n-1}{k+1} \\
&&+ (k-4) \binom{n-1}{k} + \left(\binom{k-1}{2}+\binom{k-2}{2}-1\right)
\binom{n-1}{k+1} + \binom{k}{3} \binom{n-1}{k+2}\\
&&+ \overline{\mathbf{SF}}_{n,k}(q)|_{q^4}\\
&=& (k-4)\binom{n-1}{k-1} + 
\left( \binom{k-1}{2}+\binom{k-2}{2}+\binom{k-3}{2}-3 \right)
\binom{n-1}{k}\\
&&+\left(\binom{k}{3}+\binom{k-1}{3}-k+1\right) \binom{n-1}{k+1} +
\binom{k+1}{4}\binom{n-1}{k+2}  \\
&&+\overline{\mathbf{SF}}_{n,k}(q)|_{q^4}. 
\end{eqnarray*}
This gives us a recursion for $\overline{\mathbf{SF}}_{n+1,k}(q)|_{q^4}$ in terms of $\overline{\mathbf{SF}}_{n,k}(q)|_{q^4}$ which we can iterate 
to prove that 
\begin{eqnarray*}
\overline{\mathbf{SF}}_{n,k}(q)|_{q^4} &=& 
(k-4) \binom{n-1}{k} + \left(\binom{k-1}{2}+\binom{k-2}{2}+\binom{k-2}{2}-3\right)\binom{n-1}{k+1} + \\
&&\left(2\binom{k}{3}+\binom{k-1}{2}-k+1\right)
\binom{n-1}{k+2}+ \binom{k+1}{4}\binom{n-1}{k+3}.
\end{eqnarray*}
\end{proof}

A sequence of real numbers $a_0, \ldots, a_n$ is is said to be {\em unimodal} 
if there is a $0 \leq j \leq n$ such that 
$a_0 \leq  \cdots \leq a_j \geq a_{j+1} \geq \cdots \geq a_n$ and 
is said to be {\em log-concave} if for $0 \leq i \leq n$, 
$a_i^2- a_{i-1}a_{i+1} \geq 0$ where we set $a_{-1} = a_{n+1} =0$.
If a sequence is log-concave, then 
it is unimodal. A polynomial $P(x) = \sum_{k=0}^n a_k x^k$ is said to be {\em unimodal} if $a_0, \ldots, a_n$ is a unimodal sequence and is said 
to be log-concave if $a_0, \ldots, a_n$ is log concave. 

It is easy to see from Theorem \ref{thm:id1} that 
$\overline{\mathbf{SF}}_{n,k}(q)$ is unimodal for 
all $n \geq k$ when $k \in \{1,2,3\}$.  
Computational evidence suggests that 
$\overline{\mathbf{SF}}_{n,4}(q)$ is unimodal for all $n \geq 4$ and that 
$\overline{\mathbf{SF}}_{n,5}(q)$ is unimodal for all $n \geq 5$. 
However, it is not the case that  
$\overline{\mathbf{SF}}_{n,6}(q)$ is unimodal for all $n \geq 6$. 
For example, one can use part 3 of Theorem \ref{thm:id1} to compute 
\begin{multline*} 
\overline{\mathbf{SF}}_{8,6}(q) = \\
21 +28q +31q^2+29q^3+30q^4+25q^5+23q^2+22q^7+15q^8+10q^9+7q^{10}+5q^{11}+3q^{12}+2q^{13}+q^{14}.
\end{multline*}

It is not difficult to see that for any Ferrers board 
$B =F(b_1,\ldots,b_n)$, the coefficients that appear 
in the polynomials $\mathbf{FT}_{k}(B,q)$ and $\overline{\mathbf{FT}}_k(B,q)$ 
are essentially the same. That is, we have the following theorem. 

\begin{theorem}
Let $B=F(b_1, \ldots, b_n)$ be a skyline board. Then 
\begin{equation}\label{FTone}
\overline{\mathbf{FT}}_k(B,q) = \left( \prod_{i=1}^n (1+[F_{b_i}]_qz)\right)|_{z^k}
\end{equation}
and 
\begin{equation}\label{FTtwo}
\mathbf{FT}_k(B,q) = 
\left( \prod_{i=1}^n (q^{F_{b_i}}+[F_{b_i}]_qz)\right)|_{z^k} =
q^{\sum_{i=1}^n F_{b_i}}\left( \prod_{i=1}^n (1+\frac{1}{q}[F_{b_i}]_{\frac{1}{q}}z)\right)|_{z^k}
\end{equation}
\end{theorem}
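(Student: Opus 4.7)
The plan is to prove both identities by the same direct expansion argument, exploiting the fact that a Fibonacci file placement in $\mathcal{FT}_k(B)$ is simply a choice of $k$ columns to receive tilings, together with a choice of tiling in each such column.

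First I would expand the right-hand side of \eqref{FTone} as a sum over $k$-subsets of columns:
\[
\left( \prod_{i=1}^n (1+[F_{b_i}]_qz)\right)\Bigg|_{z^k} \;=\; \sum_{\substack{S \subseteq [n] \\ |S|=k}} \prod_{i \in S} [F_{b_i}]_q.
\]
On the combinatorial side, any $P \in \mathcal{FT}_k(B)$ is determined by the index set $S = \{i_1 < \dots < i_k\}$ of columns carrying a tiling together with, for each $i \in S$, a choice of Fibonacci tiling $T_i$ of height $b_i$. Since $\overline{\mathbf{w_{B,q}}}(P) = q^{\sum_{s=1}^k \rank_{b_{i_s}}(T_{i_s})}$ and the ranking theory of Section~2 gives $\sum_{T \in \mathcal{F}_{b_i}} q^{\rank_{b_i}(T)} = [F_{b_i}]_q$, summing over tiling choices in each chosen column yields
\[
\overline{\mathbf{FT}}_k(B,q) = \sum_{\substack{S \subseteq [n] \\ |S|=k}} \prod_{i \in S} [F_{b_i}]_q,
\]
which matches the expansion above.

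For \eqref{FTtwo} I would apply exactly the same bookkeeping, noting that the only difference between $\mathbf{w_{B,q}}(P)$ and $\overline{\mathbf{w_{B,q}}}(P)$ is an additional factor of $q^{F_{b_i}}$ for each column $i$ that does not carry a tiling. Therefore
\[
\mathbf{FT}_k(B,q) = \sum_{\substack{S \subseteq [n] \\ |S|=k}} \left(\prod_{i \in S} [F_{b_i}]_q\right)\left(\prod_{i \notin S} q^{F_{b_i}}\right),
\]
which is the coefficient of $z^k$ in $\prod_{i=1}^n (q^{F_{b_i}}+[F_{b_i}]_qz)$.

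For the second equality in \eqref{FTtwo}, I would pull a factor of $q^{F_{b_i}}$ out of each binomial factor and use the elementary identity $[m]_q = q^{m-1}[m]_{1/q}$, which gives $q^{F_{b_i}} \cdot \tfrac{1}{q}[F_{b_i}]_{1/q} = [F_{b_i}]_q$. Thus
\[
q^{F_{b_i}} + [F_{b_i}]_q z \;=\; q^{F_{b_i}}\bigl(1 + \tfrac{1}{q}[F_{b_i}]_{1/q} z\bigr),
\]
and multiplying these equalities across $i=1,\ldots,n$ produces the claimed factorization. There is no real obstacle here; the whole argument is essentially the observation that the weight function factors across columns, so the only mild check is the $q \leftrightarrow 1/q$ manipulation at the end.
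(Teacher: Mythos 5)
Your proof is correct and follows essentially the same argument as the paper: each column independently contributes either its empty-column weight or $[F_{b_i}]_q$ (the sum of ranks over all tilings of height $b_i$), and $z$ marks the tiled columns, so the coefficient of $z^k$ recovers the $k$-th file polynomial. Your explicit verification of the second equality in \eqref{FTtwo} via $[m]_q = q^{m-1}[m]_{1/q}$ is a welcome addition, since the paper states that identity without proof.
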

\begin{proof}
It is easy to see that if we are creating a Fibonacci file tiling 
in $\mathcal{FT}_k(B)$, then in column $i$, we have two choices, 
namely, we can leave the column empty or put a Fibonacci tiling 
of height $b_i$.  For $\overline{\mathbf{FT}}_k(B,q)$, 
the weight of an empty column is 1 and the sum of weights of 
the Fibonacci tilings of height $b_i$ is $[F_{b_i}]_q$.  Thus 
$\left( \prod_{i=1}^n (1+[F_{b_i}]_qz)\right)|_{z^k}$ is equal 
to the sum over all Fibonacci file tilings where exactly $k$ columns 
have tiling which is equal to $\overline{\mathbf{FT}}_k(B,q)$.

Similarly, For $\mathbf{FT}_k(B,q)$, 
the weight of an empty column $i$ when it is empty is 
$q^{F_{b_i}}$ and the sum of weights of 
the Fibonacci tilings of height $b_i$ is $[F_{b_i}]_q$.  Thus 
$\left( \prod_{i=1}^n (q^{F_{b_i}}+[F_{b_i}]_qz)\right)|_{z^k}$ is equal 
to the sum over all Fibonacci file tilings where exactly $k$ columns 
have tiling which is equal to $\mathbf{FT}_k(B,q)$.
\end{proof}

It follow that for any $n$, the coefficient of $q^n$ in 
$\left( \prod_{i=1}^n (1+[F_{b_i}]_qz)\right)|_{z^k}$ is equal to 
the coefficient of $\frac{1}{q^{n+k}}$ in 
$\left( \prod_{i=1}^n (1+\frac{1}{q}[F_{b_i}]_{\frac{1}{q}}z)\right)|_{z^k}$.
It follows that 
$$\overline{\mathbf{FT}}_k(B,q)_{q^n} = 
\mathbf{FT}_k(B,q)|_{q^{-n-k+\sum_{i=1}^n F_{b_i}}}.$$

It is easy to see from (\ref{FTone}) that 
$$\overline{\mathbf{cF}}_{n,n-1}(q) = \sum_{i=1}^{n-1} [F_i]_q$$
so that coefficient of $q^k$ in $\overline{\mathbf{cF}}_{n,n-1}(q)$ weakly decreases as $k$ goes from 0 to $F_{n-1}-1$. It follows that the coefficient 
of $q^k$ in $\mathbf{cF})_{n,n-1}(q)$ weakly increase. Similarly, 
it is easy to see that 
$$\overline{\mathbf{cF}}_{n,1}(q) = \prod_{i=1}^{n-1}[F_i]_q$$
so that $\overline{\mathbf{cF}}_{n,1}(q)$ is just the rank generating 
function of a product of chains which is know to be symmetric and unimodal, 
see \cite{C}.

From our computational evidence, it seems that the polynomials 
$\overline{\mathbf{cF}}_{n,2}(q)$ 
are unimodal. However, it is not the case 
$\overline{\mathbf{cF}}_{n,k}(q)$ are unimodal for all $n$ and $k$.
For example, $\overline{\mathbf{cF}}_{9,7}(q)$ starts out 
$$28+42q+50q^2+53q^3+58q^4+57q^5+58q^6+60q^7+ \ldots .$$

Finally, our results show that the matrices  
$||(-1)^{n-k}\overline{\mathbf{cF}}_{n,k}(q)||$ and 
$||\overline{\mathbf{SF}}_{n,k}(q)||$ are inverses of each other. One can give a combinatorial proof of this fact.  Indeed, the combinatorial 
proof of \cite{BPR} which shows that matrices 
$||(-1)^{n-k}\mathbf{cf}_{n,k}(q)||$ and 
$||\mathbf{Sf}_{n,k}(q)||$ are inverses of each other can also be applied to show 
that the matrices  
$||(-1)^{n-k}\overline{\mathbf{cF}}_{n,k}(q)||$ and 
$||\overline{\mathbf{SF}}_{n,k}(q)||$ are inverses of each other.

%------------------------------------------------------------------------

\end{document}